\newtheorem{theorem}{Theorem}
\newtheorem{proposition}{Proposition}
\newtheorem{remark}{Remark}
\newtheorem{lemma}{Lemma}
\newcommand{\R}{{\mathbb R}}
\newcommand{\J}{{\mathbb J}}
\title{Monotonicity properties arising in a simple model of \emph{Wolbachia} invasion for wild mosquito populations}
\author{ Diego Vicencio$^{1}$\footnote{Corresponding author: diego.vicencio@alumnos.usm.cl}, \ Olga Vasilieva$^{2}$, \  Pedro Gajardo$^{1}$ \\
$^1$ \small Universidad Técnica Federico Santa María, Valparaiso, Chile\\
$^2$ \small Universidad del Valle, Cali, Colombia \\
}
\date{\today}
\begin{document}
\maketitle
\begin{abstract}
In this paper, we propose a simplified bidimensional \textit{Wolbachia} infestation model in a population of \textit{Aedes aegypti}  mosquitoes, preserving the main features associated with the biology of this species that can be found in higher-dimensional models. Namely, our model represents the maternal transmission of the \textit{Wolbachia} symbiont, expresses the reproductive phenotype of cytoplasmic incompatibility, accounts for different fecundities and mortalities of infected and wild insects, and exhibits the bistable nature leading to the so-called \emph{principle of competitive exclusion}.  Since \textit{Wolbachia}-based biocontrol is now accepted as an ecologically friendly and potentially cost-effective method for prevention and control of dengue and other arboviral infections, it is essential to have reduced models with the main biological characteristics of \textit{Aedes aegypti} in the presence of \textit{Wolbachia}-carriers because such models help to simplify the mathematical analysis for determining appropriate biocontrol strategies. Using tools borrowed from monotone dynamical system theory, in the proposed model, we prove the existence of an invariant threshold manifold that allows us to provide practical recommendations for performing single and periodic releases of \textit{Wolbachia}-carrying mosquitoes, seeking the eventual elimination of wild insects that are capable of transmitting infections to humans. We illustrate these findings with numerical simulations using parameter values corresponding to the \textit{wMelPop} strain of \textit{Wolbachia} that is considered the best virus blocker but induces fitness loss in its carriers.

\emph{Keywords:} \textit{Wolbachia} symbiont, monotone dynamics, competitive model, minimum viable population size.
\end{abstract}

\section{Introduction}
\label{sec-intro}

It is widely known that female mosquitoes of the species \textit{Aedes aegypti} are major transmitters of dengue and other vector-borne infections. When deliberately infected with \textit{Wolbachia}, they lose their vector competence by becoming far less capable of developing a viral load sufficient for transmission of the virus to humans. Due to this remarkable feature, \textit{Wolbachia}-based biocontrol of mosquito populations has recently emerged as a novel method for the prevention and control of vector-borne infections and is accepted as an ecologically friendly and potentially cost-effective method \cite{Hoffmann2011,McMeniman2009,Moreira2009,Ruang2006,Walker2011}.

The goal of \textit{Wolbachia}-based biocontrol is the eventual elimination of wild insects (capable of transmitting the virus to humans) by performing single or periodic releases of \textit{Wolbachia}-carrying mosquitoes in some determined localities initially populated by wild mosquitoes. The practical implementation of this method requires mass-rearing of a large quantity of \textit{Wolbachia}-infected insects for posterior releases, and the desired result depends on the progressive \textit{Wolbachia} invasion and its durable establishment in wild mosquito populations. The final outcome of this process is usually referred to as ``population replacement''\cite{Almeida2019b,McMeniman2010,Ritchie2015}.

In the literature from the last decade, one can find a variety of  \textit{Wolbachia} infestation models created with the purpose, among others, of evaluating biocontrol strategies that seek suppression of wild mosquito populations. Previous studies include some frequency-based models formulated as a single equation \cite{Almeida2020,Schraiber2012,Turelli2010}, models considering only female insects \cite{Almeida2019b,Campo2017a,Campo2017b,Contreras2020,Fenton2011}, models accounting for sex structure \cite{Campo2018,Farkas2017,Ferreira2020,Zheng2014} or stage structure \cite{Adekunle2019,Almeida2019a,Bliman2018,Xue2017}, and more sophisticated models that take into account the mosquito maturation delay or their age structure \cite{Farkas2010,Ferreira2020,Zheng2014}.

The main features of all these models describing the natural dynamics of \textit{Wolbachia} are related to mimicking vertical transmission and the interference of the reproductive outcomes induced by cytoplasmic incompatibility (or CI reproductive phenotype). In this context, it is meaningful to note that CI occurs when a female, uninfected by \textit{Wolbachia}, is inseminated by an infected male, producing inviable eggs. Thus, the CI  reproductive phenotype suppresses the growth of the uninfected population and facilitates \textit{Wolbachia} spread.

Different strains of \textit{Wolbachia} may induce either perfect (100\%) or imperfect (less than 100 \%) maternal transmission and CI reproductive phenotype. Notably, some of the existent models are designed to address imperfect maternal transmission and/or imperfect CI \cite{Adekunle2019,Almeida2019a,Almeida2020,Almeida2019b,Farkas2017,Farkas2010,Ferreira2020,Xue2017}. Nevertheless, laboratory trials evince that \textit{Aedes aegypti} mosquitoes deliberately infected by the \textit{wMelPop Wolbachia} strain (regarded as the best blocker of arboviral infections \cite{Moreira2009,Dorigatti2018,Dutra2016,Ferguson2015,Walker2011,Woolfit2013}) exhibit almost perfect CI and maternal transmission \cite{Dorigatti2018,Yeap2011}. Table \ref{tab:CI} summarizes the results of matings between infected and uninfected mosquitoes when the maternal transmission and CI are perfect (100\%). On the other hand, many scholars also indicate that the \textit{wMelPop} strain is associated with high ``fitness costs'' since it reduces the female fecundity, the viability of eggs, and the lifespan of infected mosquitoes \cite{Dorigatti2018,McMeniman2010,Schraiber2012,Ritchie2015}. The latter makes the spread of \textit{wMelPop Wolbachia} infection a rather challenging task.

\begin{table}[h]
\begin{center}
	\begin{tabular}{||c||c|c||}
		\hline \hline
		\multicolumn{3}{||c||}{Mosquito offspring} \\
		\hline
		Adults & \textit{Wolbachia}-infected \female & Uninfected \female \\
		\hline \hline
		\textit{Wolbachia}-infected \mars & \textcolor{red}{\textbf{Infected}}&\textcolor{blue}{\textbf{Inviable eggs}}\\
		\hline
		Uninfected \mars & \textcolor{red}{\textbf{Infected}}&\textbf{Uninfected}\\
		\hline \hline
	\end{tabular}
	\caption{Illustration of the CI reproductive phenotype and maternal transmission of \textit{Wolbachia}. \label{tab:CI}}
\end{center}
\end{table}

The group of models assuming perfect CI and maternal transmission of \textit{Wolbachia} exhibit a bistable nature that makes them fully compliant with the so-called \emph{principle of competitive exclusion} \cite{Hsu1996}. This phenomenon implies the existence of a certain (dynamic) threshold in the current level (or frequency) of infection above which \textit{Wolbachia} is capable of invading and persisting in the uninfected population and below which the population is driven toward extinction. 
For each infection frequency, this threshold can be expressed in terms of the so-called \emph{minimal viable sizes} of each population (infected and uninfected) that are tightly related to the frequency-dependent Allee effect. Anticipated knowledge of the minimal viable size of the \textit{Wolbachia}-infected population corresponding to the current size of the wild population is the key issue for determining the appropriate size of release(s) for implementation of \textit{Wolbachia}-based biocontrol, and one of the main goals of the present work consists in assessing the release size(s) through the use of a simplified bidimensional model of \textit{Wolbachia} invasion. 

In effect, many scholars have intuitively detected the presence of the aforementioned threshold in the infection frequency (see, e.g., \cite{Almeida2019b,Bliman2018,Campo2017a,Campo2018,Contreras2020,Farkas2017,Fenton2011,Ferreira2020,Schraiber2012,Turelli2010,Xue2017,Zheng2014}). However, none of these works were focused on explicit identification of the minimal viable sizes of infected and uninfected populations nor their useful significance for the practical implementation of \textit{Wolbachia}-based biocontrol. The present paper intends to contribute to this strand of research by filling that gap. 

All models describing \textit{Wolbachia} invasion are competitive; however, only a few of them rigorously exhibit the property of monotonicity (see \cite{Almeida2019b,Bliman2018} and a simplified version without maturation delay considered in \cite{Zheng2014}). This important property makes the theoretical analysis of these models simpler by making use of the variety of research results developed for monotone dynamical systems and assembled in \cite{Smith1995}. In particular, the application of the theory of monotone systems to competitive dynamics allows identifying a partial order under which a competitive monotone system exhibits the so-called \emph{saddle-point behavior} \cite{Jiang2004}, and the latter bears a strong relationship to the principle of competitive exclusion \cite{Hsu1996}. In general terms, a system exhibiting the saddle-point behavior possesses two locally stable equilibria on the boundary and one unstable (saddle-point) equilibrium in the interior of the state domain. Moreover, the state domain is divided into three disjoint and invariant parts: two attraction basins of boundary equilibria and the so-called ``threshold manifold'' (or \emph{separatrix}) containing the unstable equilibrium that separates the attraction basins. One of the goals of the present work is to identify the threshold manifold and study its properties in light of the practical implementation of \textit{Wolbachia}-based biocotrol. 

For that purpose, we have developed a simplified bidimensional variant of the four-dimensional model presented in \cite{Bliman2018} that retains all the key properties of the original four-dimensional model, including the property of monotonicity. Notably, our reduced model bears a certain degree of similarity to the one studied in \cite{Zheng2014}, but they are not the same. The bidimensional model obtained in \cite{Zheng2014} by omitting the maturation delay assumes only the density-dependent mortality of both mosquito populations and ignores their natural mortalities which are different for infected and uninfected insects. In contrast, our bidimensional model accounts for both mortality types (natural and density-dependent). Furthermore, the authors of \cite{Zheng2014} have, in effect, identified the threshold manifold for their bidimensional model, but they did not discuss its underlying properties in light of the practical implementation of \textit{Wolbachia}-based biocontrol. 

Generally, bidimensional (or planar) dynamical systems have several important advantages compared to higher-dimensional systems. First, they concede a comprehensive visualization of the system's behavior in the phase plane that not only facilitates the conceptual theoretical analysis of the model but also provides meaningful interpretations of potential outcomes of the model. Second, for planar dynamical systems, there are numerous optimal control tools  \cite{Boscain2004,Sussmann1987a,Sussmann1987b,Sussmann1987c} that can be applied to the model proposed in this paper to obtain an analytically optimal synthesis of various optimization problems related to biocontrol purposes.  Such analytical solutions can be later tested in more complex models. Although we do not analyze optimal control problems in this work, the proposed model has been designed to serve these purposes in the future and to provide major insights regarding the evolution of both mosquito populations under the action of the biocontrol.

The paper is organized as follows. In Section \ref{sec-model}, we introduce a bidimensional \textit{Wolbachia} infestation model for populations of \textit{Aedes aegypti} mosquitoes. In Section \ref{sec-analisys}, we establish some basic properties of the introduced model. In Section \ref{sec-monot}, we prove that the reduced model retains the property of monotonicity inherent from the original four-dimensional model developed in \cite{Bliman2018}. Using this remarkable property and other tools from the theory of monotone dynamical systems, we establish the existence of an invariant threshold manifold for the proposed bidimensional model.  Finally, in Section \ref{sec-appl}, we discuss the core properties of the points located on the threshold manifold and propose their practical interpretations for performing single and periodic releases of \textit{Wolbachia}-infected mosquitoes to reach an eventual elimination of wild insects and thus to achieve the population replacement.

\section{Simplified model of \textit{Wolbachia} infestation}
\label{sec-model}

Let us consider two populations of mosquitoes, $P_N(t):= F_N(t) + M_N(t)$ and $P_W(t):= F_W(t) + M_W(t)$ present at the day $t \geq 0$ in some locality, where $F_N(t)$ and $M_N(t)$ stand, respectively, for the female and male insect that are free of\emph{ Wolbachia} symbiotic bacterium while $F_W(t)$ and $M_W(t)$ denote, respectively, the female and male insects infected with \textit{Wolbachia} bacterium.

Scientific evidence \cite{Aida2011,Styer2007} suggests that wild female and male mosquitoes are often evenly distributed; therefore, let us suppose that $N(t) := F_N(t)=M_N(t)$ for all $t \geq 0$. A similar assumption can be introduced, for the sake of simplicity, regarding \textit{Wolbachia}-carrying mosquitoes, that is, $W(t) := F_W(t)=M_W(t)$ for all $t \geq 0$. Then, the frequency of \textit{Wolbachia} infection in the total mosquito population, $\dfrac{P_W(t)}{P_N(t) + P_W(t)}$ can be determined by $N(t)$ and $W(t)$ as $\dfrac{W(t)}{N(t) + W(t)}$.

To propose a simplified model of \textit{Wolbachia} invasion, we have chosen as a starting point the stage-structured model of \textit{Wolbachia} infestation with four state variables developed by Bliman \emph{et al} \cite{Bliman2018}, and our final goal is to design a bidimensional (reduced) version of this model with similar characteristics.

Let us err on the side of caution while reducing the model's dimension and recall that a ``good'' model for describing \textit{Wolbachia} invasion must necessarily account for the following features:
\begin{enumerate}[(i)]
\item
\label{good1}
Maternal transmission of the bacterium \textit{Wolbachia} to the next generation. This feature implies that \textit{Wolbachia}-infected mosquitoes are progenies of \textit{Wolbachia}-carrying females. A wild female cannot produce \textit{Wolbachia}-infected offspring.
\item
\label{good2}
The reproductive phenotype of cytoplasmic incompatibility (CI). This feature implies that \textit{Wolbachia}-carrying females are capable of producing viable and \textit{Wolbachia}-infected offspring after mating with either wild or \textit{Wolbachia}-carrying males. On the other hand, wild females produce inviable offspring after mating with \textit{Wolbachia}-carrying males.
\item
\label{good3}
Positive invariance and well-posedness. Any model is ``biologically viable'' if its state variables are nonnegative for all $t \geq 0$ and their underlying trajectories are bounded.
\item
\label{good4}
Bistable nature. This feature expresses the so-called \emph{principle of competitive exclusion} \cite{Hsu1996} according to which only one mosquito population (either with or without \textit{Wolbachia}) should ultimately survive and persist.
\end{enumerate}
In mathematical terms, feature (\ref{good4}) implies the presence of two local attractors (boundary equilibria) and an unstable strictly positive coexistence equilibrium in between. From the biological standpoint, the last feature (\ref{good4}) is also directly related to the so-called \emph{frequency-dependent Allee effect} and implies the existence of a certain threshold in the frequency of \textit{Wolbachia} infection above which the wild population is eventually driven toward extinction, and \textit{Wolbachia} successfully invade the mosquito population. Such a threshold is usually referred to as the ``minimal viable population size'' of wild mosquitoes.

In addition to key features given by (\ref{good1})--(\ref{good4}), the four-dimensional model developed in \cite{Bliman2018} possesses another important property related to its monotonicity. Namely, the semiflow associated with that four-dimensional model is monotone and strongly order-preserving for the partial order induced by the cone $\mathcal{K}:= \mathbb{R}_{-} \times \mathbb{R}_{-} \times \mathbb{R}_{+} \times \mathbb{R}_{+}$. Notably there are several models of \textit{Wolbachia} invasion, both four-dimensional \cite{Campo2018,Contreras2020,Farkas2017} and bidimensional \cite{Campo2017a,Campo2017b}, that do not exhibit such an important property. Therefore, it is highly desirable to conserve this property in the reduced bidimensional version of the original model developed in \cite{Bliman2018}.

To reduce the four-dimensional model developed in \cite{Bliman2018} to a bidimensional scenario, the aquatic and aerial stages of each mosquito class (with and without \textit{Wolbachia}) can be ``merged'' into one state variable denoted by $N(t)$ and $W(t)$ for noninfected and \textit{Wolbachia}-infected insects, respectively. As a result, we obtain the following model without the stage structure
\begin{subequations}
\label{sys}
\begin{align}[left = \empheqlbrace\,]
\label{sys-N}
& \frac{d N}{dt} = F(N,W) := \rho_N N \left(\frac{N}{N+W} \right) - \alpha_N N - \beta_N N(N+W) \\
\label{sys-W}
& \frac{d W}{dt} = G(N,W) := \rho_W W - \alpha_W W - \beta_W W(N+W)
\end{align}
\end{subequations}
that is bidimensional and describes the time evolution of adult mosquito populations, $N(t)$ and $W(t)$. 

Let us provide a brief description of positive constant parameters included in the model \eqref{sys}:
\begin{itemize}
\item
The parameters $\rho_N$ and $\rho_W$ represent the fecundity rates of uninfected and \emph{Wolbachia}-infected insects, respectively, in the absence of competition (i.e., a  mean number of adult mosquitoes produced by one female on average per day during her lifespan).
\item
The parameters $\alpha_N$ and $\alpha_W$ refer to a natural mortality rate of uninfected and \emph{Wolbachia}-infected insects, respectively (note that $1/\alpha_N$ and $1/\alpha_W$ express the average lifespan of noninfected and \emph{Wolbachia}-infected mosquitoes).\footnote{Notably, system \eqref{sys} bears a certain degree of similarity to the one studied in \cite{Zheng2014}. However, system \eqref{sys} explicitly accounts for the natural mortalities, $\alpha_N$ and $\alpha_W$, of wild and \textit{Wolbachia}-infected mosquitoes while the model in \cite{Zheng2014} ignores them and only considers the density-dependent mortalities (expressed by the last terms in both equations of \eqref{sys}).}
\item
The parameters $\beta_N$ and $\beta_W$ are associated with the competition between two mosquito populations for food resources, breeding sites, and mating opportunities, including larvae development into adults under density dependence.
\end{itemize}
To guarantee survival and persistence of each population in the absence of another, the following conditions are introduced:
\begin{equation}
\label{survival}
\rho_N > \alpha_N, \qquad  \rho_W > \alpha_W.
\end{equation}

It is easy to verify that condition (\ref{good1}) on p. \pageref{good1} is fulfilled by the equation \eqref{sys-W} according to which \textit{Wolbachia}-carriers are progenies of $W$ only (cf. the positive term in the right-hand side of \eqref{sys-W} that expresses the recruitment of $W$). Additionally, equation \eqref{sys-N} captures the condition (\ref{good2}) on p. \pageref{good2} that refers to the CI-phenotype in the sense that recruitment of wild mosquitoes (cf. the positive term in the right-hand side of \eqref{sys-W}) is proportional to the number of matings between wild males and females. These outcomes agree with the description of maternal transmission and the reproductive phenotype of cytoplasmic incompatibility presented in Table \ref{tab:CI}.

To illustrate the remaining features,  a deeper analysis of the model \eqref{sys} is required, and the latter is presented in the next section.

\section{Primary properties of the reduced model}
\label{sec-analisys}

This section is focused on establishing the key features (\ref{good3}) and (\ref{good4}) on p. \pageref{good4} for the reduced bidimensional model \eqref{sys}. The well-posedness of the model is attested in Subsection \ref{subsec-wp}, and the stability analysis of the system \eqref{sys} is carried out in Subsection \ref{subsec-stability}.

\subsection{Well-posedness of the model}
\label{subsec-wp}

To verify the condition (\ref{good3}) on p. \pageref{good3}, we establish and prove the following result.

\begin{proposition}
\label{prop1}
With condition \eqref{survival} in force, dynamical system \eqref{sys} is invariant in the positive cone $\mathbb{R}_+^2$. Additionally, there exists a compact set $\mathcal{X} \subset \mathbb{R}_+^2$ such that the trajectories of \eqref{sys} engendered by initial conditions $\big( N(0),W(0) \big) \in \mathcal{X}$ remain in $\mathcal{X}$ for all $t \geq 0$.
\end{proposition}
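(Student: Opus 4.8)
The plan is to separate the statement into three parts: \textbf{(a)} local existence and uniqueness of trajectories (which needs a small amount of care, since the right-hand side $F$ is not \emph{a priori} defined at the origin because of the term $\rho_N N^2/(N+W)$); \textbf{(b)} positive invariance of $\mathbb{R}_+^2$; and \textbf{(c)} construction of a compact positively invariant set, which as a by-product also yields global existence and thus completes the verification of well-posedness (feature (\ref{good3})).

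For \textbf{(a)}, I would note that $G$ is a polynomial, hence $C^\infty$, and that the only delicate term in $F$ is $g(N,W):=N^2/(N+W)$. Extending $g$ (and hence $F$) by the value $0$ at the origin, I would check that the resulting map is continuous and locally Lipschitz on $\mathbb{R}_+^2$: continuity follows from the elementary bound $0\le g(N,W)\le N$, and the Lipschitz property follows from the fact that the partial derivatives of $g$ are bounded on $\mathbb{R}_+^2\setminus\{(0,0)\}$, namely $\partial_N g = \tfrac{N(N+2W)}{(N+W)^2}\in[0,2]$ and $\partial_W g = \tfrac{-N^2}{(N+W)^2}\in[-1,0]$. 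The Picard--Lindel\"of theorem then gives, for every $\big(N(0),W(0)\big)\in\mathbb{R}_+^2$, a unique maximal solution. For \textbf{(b)}, I would observe that each equation carries its own state variable as a common factor, i.e. $F(N,W)=N\,\phi(N,W)$ and $G(N,W)=W\,\psi(N,W)$ with $\phi,\psi$ continuous on the closed quadrant; then $N(t)=N(0)\exp\!\int_0^t \phi\big(N(s),W(s)\big)\,ds$ and similarly for $W$, so each coordinate keeps the sign of its initial datum. Equivalently, the two coordinate axes are themselves invariant ($F(0,W)=0$ and $G(N,0)=0$), and by uniqueness no trajectory starting in $\mathbb{R}_+^2$ can cross them; hence $\mathbb{R}_+^2$ is positively invariant.

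For \textbf{(c)}, I would use scalar differential inequalities together with a comparison argument. On $\mathbb{R}_+^2$ one has $\dot W = W\big(\rho_W-\alpha_W-\beta_W(N+W)\big)\le W\big(\rho_W-\alpha_W-\beta_W W\big)$ because $N\ge 0$, and likewise, using $\tfrac{N}{N+W}\le 1$ and $N+W\ge N$, $\dot N \le N\big(\rho_N-\alpha_N-\beta_N N\big)$. Comparing with the logistic equations $\dot u=u(\rho_W-\alpha_W-\beta_W u)$ and $\dot v=v(\rho_N-\alpha_N-\beta_N v)$, whose carrying capacities $W^\ast:=(\rho_W-\alpha_W)/\beta_W$ and $N^\ast:=(\rho_N-\alpha_N)/\beta_N$ are strictly positive by \eqref{survival}, I obtain $W(t)\le\max\{W(0),W^\ast\}$ and $N(t)\le\max\{N(0),N^\ast\}$ on the maximal interval of existence. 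Consequently the rectangle $\mathcal{X}:=[0,N^\ast]\times[0,W^\ast]$ is compact, contained in $\mathbb{R}_+^2$, and positively invariant (indeed every trajectory enters $\mathcal{X}$ in finite time and stays there). Since all solutions are thereby bounded, the maximal interval of existence is $[0,\infty)$, which finishes the argument.

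I do not expect a genuine obstacle here; the two points that demand a little attention are the behaviour of $F$ at the origin --- ensuring the extended vector field is Lipschitz there so that uniqueness, on which the invariance argument via the axes relies, actually holds --- and the bookkeeping in the comparison step, where one must keep in mind that the differential inequalities for $\dot N$ and $\dot W$ hold pointwise along the genuine trajectory irrespective of the coupling between the two equations.
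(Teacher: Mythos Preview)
Your proposal is correct and follows essentially the same route as the paper: both arguments use that the coordinate axes are invariant (via $F(0,W)=G(N,0)=0$) to obtain positive invariance of $\mathbb{R}_+^2$, and both bound $\dot N$ and $\dot W$ above by the decoupled logistic expressions to show that the rectangle $[0,(\rho_N-\alpha_N)/\beta_N]\times[0,(\rho_W-\alpha_W)/\beta_W]$ is a compact positively invariant set. The one genuine addition in your write-up is part~\textbf{(a)}: you explicitly extend $F$ by $0$ at the origin and verify local Lipschitz continuity there, a point the paper does not address (it simply works with the system as written and later notes that the Jacobian cannot be evaluated at $(0,0)$); this extra care is welcome, since the invariance-via-uniqueness argument you invoke does rely on it.
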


\begin{proof}
It is immediately noted that
\[ \left. \frac{d N}{dt} \right|_{N=0} = 0 \qquad \text{and} \qquad  \left. \frac{d W}{dt} \right|_{W=0} = 0 \]
which plainly indicates that system \eqref{sys} is invariant in the positive cone $\mathbb{R}^2_+$, i.e., its trajectories $N(t)$ and $W(t)$ engendered by $\big( N(0),W(0) \big) \in \mathbb{R}_+^2$ remain in $\mathbb{R}_+^2$ for all $t \geq 0$, and thus
\[ N(t)\geq 0 \quad \text{and} \quad W(t)\geq 0 \quad \text{ for all } \quad t \geq 0. \]

Furthermore, let us observe that for $F(N,W)$ and $G(N,W)$ the following hold:
\begin{equation*}
F(N,W)\leq \beta_N N \left( \frac{\rho_N  - \alpha_N}{\beta_N} - N \right)  \qquad \text{and} \qquad G(N,W)  \leq \beta_W W\left( \frac{\rho_W - \alpha_W}{\beta_W} -W \right).
\end{equation*}
Thus, one has
\[ \left. \frac{d N}{dt} \right|_{N=\frac{\rho_N  - \alpha_N}{\beta_N} } \le 0 \qquad \text{and} \qquad  \left. \frac{d W}{dt} \right|_{W=\frac{\rho_W - \alpha_W}{\beta_W} } \le 0 ,\]
where these quantities are strictly negative when $N > \frac{\rho_N  - \alpha_N}{\beta_N}$ and $W > \frac{\rho_W - \alpha_W}{\beta_W}$.
Hence, it follows  that
\[ N(t) \leq \max \big\{ N_{\sharp}, N(0) \big\} \quad \text{and} \quad W(t) \leq \max \big\{ W_{\sharp}, W(0) \big\}  \]
where
\begin{equation}
\label{NW-sharp}
N_{\sharp} := \frac{\rho_N-\alpha_N}{\beta_N} \quad \text{and} \quad W_{\sharp}:= \frac{\rho_W-\alpha_W}{\beta_W}.
\end{equation}
In other words, the compact set
\begin{equation}
\label{ab-set}
\mathcal{X} := \Big\{ (N,W) \in \mathbb{R}_+^2: \ 0 \leq N \leq N_{\sharp}, \  0 \leq W \leq W_{\sharp}\Big\}
\end{equation}
is invariant in the sense that all trajectories $(N(t), W(t))$ of \eqref{sys} engendered by $\big( N(0),W(0) \big) \in \mathcal{X}$ remain in $\mathcal{X}$ for all $t \geq 0$.
\end{proof}

It stems from the proof of Proposition \ref{prop1} that the reduced dynamical system \eqref{sys} is \emph{dissipative} \cite{Smith1995}, and $\mathcal{X}$ is referred to as an \emph{absorbing set}. This means that all trajectories  $N(t), W(t)$ of \eqref{sys} engendered by $\big( N(0),W(0) \big) \in \mathbb{R}_+^2 \setminus \mathcal{X}$ are attracted to $\mathcal{X}$ and there is a finite $\bar{t} >0$ such that
\[ \big( N(\bar{t}),W(\bar{t}) \big) \in  \mathcal{X} \quad \text{for all } \ t \geq \bar{t}. \]

Thus, we can conclude that all solutions of \eqref{sys} engendered by nonnegative initial conditions $\big( N(0),W(0) \big) \in \R_+^2$ are uniformly ultimately bounded.

\begin{remark}
\label{rem1}
It is easy to check that $\mathbb{R}_+^2$ contains three subsets that are invariant with respect to solutions of the system \eqref{sys}:
\begin{enumerate}
  \item
  Set $\Omega_0 := \big\{ (0,0) \big\}$ containing the origin is invariant since for $\big(N(0),W(0)\big) \in \Omega_0$ we have that $\big(N(t),W(t)\big) \in \Omega_0$ for all $t \geq 0$.
  \item
  Set $\Omega_N := \big\{ (N,W) \in \mathbb{R}_+^2: \  N>0,  W=0 \big\}$ that contains the $N$-axis is invariant. In effect, $W(0)=0$ implies the absence of  \textit{Wolbachia}-carriers for all $t \geq 0$ and the dynamical system \eqref{sys} turns into the logistic equation for the wild population:
\begin{equation}
\label{log-N}
\frac{d N}{dt} = N (\rho_N - \alpha_N  - \beta_N N),
\end{equation}
whose solutions engendered by $N(0)>0$ tend to the carrying capacity $N_{\sharp}$ as $t \to \infty$. Therefore, if $\big( N(0), W(0) \big) \in \Omega_N$, we have that $\big( N(t),W(t) \big) \in \Omega_N$ for all $t \geq 0$.
  \item
  Set $\Omega_W := \big\{ (N,W) \in \mathbb{R}_+^2: \  N=0,  W>0 \big\}$ that contains the $W$-axis is also invariant because $N(0)=0$ implies the absence of wild mosquitoes for all $t \geq 0$ and the dynamical system \eqref{sys} turns into the logistic equation for the \textit{Wolbachia}-carrying population
 \begin{equation}
\label{log-W}
 \frac{d W}{dt} = W (\rho_W - \alpha_W  - \beta_W W),
\end{equation}
 whose solutions engendered by $W(0)>0$ tend to the carrying capacity $W_{\sharp}$ as $t \to \infty$. Therefore, if $\big( N(0),W(0) \big) \in \Omega_W$, we have that $\big( N(t),W(t) \big) \in \Omega_W$ for all $t \geq 0$.
\end{enumerate}
\end{remark}

\subsection{Stability analysis of the model}
\label{subsec-stability}

To verify the condition (\ref{good4}) on p. \pageref{good4}, it is necessary to determine all possible equilibria of the system \eqref{sys} that are solutions of the algebraic system
\begin{subequations}
\label{sys-fp}
\begin{align}[left = \empheqlbrace\,]
\label{sys-fp-N}
0 &= F(N,W)= \rho_N N \left(\frac{N}{N+W} \right) - \alpha_N N - \beta_N N(N+W) \\
\label{sys-fp-W}
0 &= G(N,W)= \rho_W W - \alpha_W W - \beta_W W(N+W) .
\end{align}
\end{subequations}

After some manipulations, we determine that the dynamical system \eqref{sys} has four steady states (or equilibria), all of them are located in $\mathcal{X} \subset \mathbb{R}_+^2$, defined in \eqref{ab-set}, and their coordinates can be explicitly expressed in terms of the model's parameters.
\begin{itemize}
\item
Trivial steady state $\mathbf{E}_0=(0,0)$ that corresponds to the extinction of both mosquito populations.
\item
Boundary steady state $\mathbf{E}_N=(N_{\sharp},0)$ that corresponds to the survival and persistence of the wild mosquito population and eventual extinction of the \textit{Wolbachia}-carrying population.
\item
Boundary steady state $\mathbf{E}_W=(0,W_{\sharp})$ that corresponds to the survival and persistence of the \textit{Wolbachia}-carrying population and eventual extinction of the wild mosquito population.
\item
Strictly positive steady state $\mathbf{E}_c=(N_c,W_c)$ that corresponds to the coexistence of both mosquito populations and where
\begin{equation}
\label{coex}
N_c = W_{\sharp} \left(1 - \frac{\beta_N}{\rho_N}(N_{\sharp} - W_{\sharp}) \right), \quad W_c = W_{\sharp} \frac{\beta_N}{\rho_N}(N_{\sharp} - W_{\sharp}) \quad \text{and} \quad N_c + W_c =W_{\sharp}.
\end{equation}
\end{itemize}

Figure \ref{fig-phase1} displays the abovementioned equilibria on the phase portrait of the system \eqref{sys}. This figure also shows two nullclines or zero-growth isoclines of the system: the $N$-nullcline (i.e., the curve $F(N,W)=0$) is plotted in blue color and the $W$-nullcline (i.e., the curve $G(N,W)=0$) is plotted in red color.

Notably, violation of conditions \eqref{survival} (i.e., a situation with $\rho_N < \alpha_N$ or $\rho_W < \alpha_W$) represents the case in which, for each population, the mortality rate is higher than the recruitment rate. The only outcome in such a case would be the extinction of both populations, and $E_0=(0,0)$ would be a globally asymptotically stable equilibrium, while the other equilibria ($\mathbf{E}_N, \mathbf{E}_W$, and $\mathbf{E}_c$) would become unfeasible (that is, with negative coordinates). We will not take into consideration this case, and from now on, we will assume that both conditions in \eqref{survival} are always in force.

On the other hand, the existence of a strictly positive equilibrium $\mathbf{E}_c=(N_c,W_c)$ requires that
\begin{equation}
\label{cond-coex}
N_{\sharp} - W_{\sharp} >0.
\end{equation}
From the biological standpoint, condition \eqref{cond-coex} is rather credible. Let us recall that \textit{Wolbachia} infection negatively affects the individual fitness of its carriers by reducing the females' fecundity and increasing the natural mortality of mosquitoes (see the exhaustive review by Dorigatti\emph{ et al.}, 2018 \cite{Dorigatti2018} and more precise references therein). Therefore, it holds that
\[ \rho_W < \rho_N \quad \text{and} \quad \alpha_W > \alpha_N. \]
Furthermore, several recent studies have determined that, at high levels of intraspecific competition, \textit{Wolbachia}-infected larvae experience reduced survival \cite{Crain2011,Oliveira2017,Suh2013}. In other words, the \textit{Wolbachia}-carrying insects exhibit higher mortality due to intraspecific competition than wild insects, meaning that $\beta_N \leq \beta_W$. From the above rationale, it follows that
\[ \frac{\rho_N - \alpha_N}{\beta_N} = N_{\sharp} > W_{\sharp} = \frac{\rho_W - \alpha_W}{\beta_W} \]
which is equivalent to \eqref{cond-coex}.

The following result is claimed in order to complete the proof of the condition (\ref{good4}) on p. \pageref{good4}.
\begin{proposition}
\label{prop2}
With conditions \eqref{survival} and \eqref{cond-coex} in force, the steady states $\mathbf{E}_N=(N_{\sharp},0)$ and $\mathbf{E}_W=(0,W_{\sharp})$ are asymptotically stable while the coexistence equilibrium $\mathbf{E}_c=(N_c,W_c)$ is unstable. Furthermore, the trivial steady state $\mathbf{E}_0$ is a source (nodal repeller).
\end{proposition}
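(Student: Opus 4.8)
The plan is to classify each of the four equilibria by linearisation at every point where the vector field $(F,G)$ is of class $C^1$, and by an elementary comparison argument at the origin, where it is not. The common tool is the Jacobian
\[
J(N,W)=\begin{pmatrix}
\rho_N\dfrac{N(N+2W)}{(N+W)^2}-\alpha_N-\beta_N(2N+W) & -\dfrac{\rho_N N^2}{(N+W)^2}-\beta_N N\\
-\beta_W W & \rho_W-\alpha_W-\beta_W(N+2W)
\end{pmatrix}.
\]
Its entries are smooth on $\mathbb{R}_+^2\setminus\{\mathbf{E}_0\}$ because $N+W>0$ at $\mathbf{E}_N$, $\mathbf{E}_W$, and $\mathbf{E}_c$ (recall $N_c+W_c=W_\sharp>0$; note also that \eqref{cond-coex} forces $N_c,W_c>0$); only at $\mathbf{E}_0$ does the frequency term $N/(N+W)$ destroy differentiability, so $\mathbf{E}_0$ must be treated separately.

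For $\mathbf{E}_N=(N_\sharp,0)$ and $\mathbf{E}_W=(0,W_\sharp)$ the linearisations are triangular, so the eigenvalues are just the diagonal entries. At $\mathbf{E}_N$ one obtains an upper-triangular $J$ with diagonal entries $-(\rho_N-\alpha_N)$ and $\rho_W-\alpha_W-\beta_W N_\sharp=-\beta_W(N_\sharp-W_\sharp)$, both strictly negative by \eqref{survival} and \eqref{cond-coex}; hence $\mathbf{E}_N$ is a locally asymptotically stable node. At $\mathbf{E}_W$ one obtains a lower-triangular $J$ with diagonal entries $-(\alpha_N+\beta_N W_\sharp)$ and $-(\rho_W-\alpha_W)$, both strictly negative by \eqref{survival}; hence $\mathbf{E}_W$ is locally asymptotically stable as well (in fact this last conclusion does not even use \eqref{cond-coex}).

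For the coexistence equilibrium $\mathbf{E}_c=(N_c,W_c)$ I would not chase individual eigenvalues but only the sign of $\det J(\mathbf{E}_c)$: a $2\times2$ matrix with negative determinant has two real eigenvalues of opposite sign, i.e.\ $\mathbf{E}_c$ is a saddle point and therefore unstable. The computation is short once one substitutes the two equilibrium identities $N_c+W_c=W_\sharp$ and $\rho_N N_c/W_\sharp=\alpha_N+\beta_N W_\sharp$ (the latter obtained by dividing \eqref{sys-fp-N} by $N_c>0$): they collapse the lower row of $J(\mathbf{E}_c)$ to $(-\beta_W W_c,\ -\beta_W W_c)$ and, after simplification, give $\det J(\mathbf{E}_c)=-\beta_W\rho_N N_cW_c/W_\sharp<0$.

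Finally, at $\mathbf{E}_0=(0,0)$, where no Jacobian is available, I would argue directly. Pick $\delta>0$ so small that $\beta_W\delta\le\tfrac{1}{2}(\rho_W-\alpha_W)$ and $\beta_N\delta\le\tfrac{1}{2}(\rho_N-\alpha_N)$, and work in $B_\delta:=\{(N,W)\in\mathbb{R}_+^2:\ N+W\le\delta\}$. There $\dot W=W\big(\rho_W-\alpha_W-\beta_W(N+W)\big)\ge\tfrac{1}{2}(\rho_W-\alpha_W)W$, so a trajectory with $W(0)>0$ satisfies $W(t)\ge W(0)\exp\!\big(\tfrac{1}{2}(\rho_W-\alpha_W)t\big)$ while it stays in $B_\delta$ and hence must leave $B_\delta$ in finite time; a trajectory with $W(0)=0$, $N(0)>0$ lives on the invariant $N$-axis (Remark \ref{rem1}), where $\dot N\ge\tfrac{1}{2}(\rho_N-\alpha_N)N$ near $0$, so it escapes $B_\delta$ too. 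Thus every nonzero orbit starting near $\mathbf{E}_0$ is repelled from it, and since the coordinate axes are invariant nearby orbits cannot wind around $\mathbf{E}_0$, so it is a source (nodal repeller). The main obstacle is exactly this lack of smoothness at $\mathbf{E}_0$, which rules out routine eigenvalue bookkeeping there; a secondary, purely computational nuisance is the algebraic reduction of $J(\mathbf{E}_c)$ via the equilibrium relations needed to exhibit the negative determinant cleanly.
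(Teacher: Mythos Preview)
Your proof is correct and follows essentially the same strategy as the paper: triangular Jacobians at $\mathbf{E}_N$ and $\mathbf{E}_W$, the sign of $\det J(\mathbf{E}_c)$ for the saddle, and a direct non-linearisation argument at $\mathbf{E}_0$. Your algebra at $\mathbf{E}_c$ is slightly cleaner (noticing that the bottom row of $J(\mathbf{E}_c)$ collapses to $(-\beta_W W_c,\,-\beta_W W_c)$ makes the determinant immediate; the paper instead substitutes $\rho_N=\beta_N W_\sharp(N_\sharp-W_\sharp)/W_c$ and arrives at the equivalent value $-\beta_W\beta_N(N_\sharp-W_\sharp)N_c$), and your treatment of $\mathbf{E}_0$ via explicit exponential lower bounds is in fact more complete than the paper's, which only exhibits repulsion along the coordinate axes rather than for all nearby initial data.
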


\begin{proof}
Local stability of each equilibrium can be determined by the signs of eigenvalues of the Jacobian matrix associated with the system \eqref{sys}:
\begin{equation}
\label{Jacobiano}
\mathbb{J} (N,W) := \left[ \begin{matrix} \rho_N\left(1-\dfrac{W^2}{(N+W)^2}\right) - \alpha_N - \beta_N(W+2N) & -N\left(\beta_N + \rho_N\dfrac{N}{(N+W)^2} \right)\\
& \\
-\beta_W W & \rho_W - \alpha_W - \beta_W(N+2W)
\end{matrix}\right] .
\end{equation}
Direct evaluation of \eqref{Jacobiano} in the boundary steady state $\mathbf{E}_N=(N_{\sharp},0)$  renders that
\[ \mathbb{J} (N_{\sharp},0)= \left[\begin{matrix} \rho_N - \alpha_N - 2\beta_N N_{\sharp} & -N_{\sharp}\left(\beta_N + \rho_N \right)\\ & \\
0 & \rho_W - \alpha_W - \beta_W N_{\sharp}
\end{matrix}\right], \]
which is an upper-triangular matrix and its eigenvalues $\lambda_i^N, i=1,2$ are located on the main diagonal. In effect,
\[ \lambda_1^N = \rho_N - \alpha_N - 2\beta_N N_{\sharp} =- (\rho_N - \alpha_N) < 0 , \qquad \lambda_2^N =  \rho_W - \alpha_W - \beta_W N_{\sharp}=- \beta_W (N_{\sharp} - W_{\sharp}) < 0 \]
are both negative due to the conditions \eqref{survival}, \eqref{cond-coex}. Therefore, $\mathbf{E}_N=(N_{\sharp},0)$ is locally asymptotically stable (nodal attractor).

Furthermore, by direct substitution of the boundary steady state $\mathbf{E}_W=(0,W_{\sharp})$ in \eqref{Jacobiano}, we obtain 
\[ \mathbb{J} (0, W_{\sharp})= \left[\begin{matrix} - \alpha_N - \beta_N W_{\sharp} & 0 \\ & \\
-\beta_W W_{\sharp} & \rho_W - \alpha_W - 2\beta_W W_{\sharp} \end{matrix}\right] \]
which is a lower-triangular matrix and its eigenvalues $\lambda_i^W, i=1,2$ are located on the main diagonal. Consequently,
\[  \lambda_1^W = - \alpha_N - \beta_N W_{\sharp}< 0, \qquad  \lambda_2^W =  \rho_W - \alpha_W - 2\beta_W W_{\sharp} =- (\rho_W - \alpha_W) < 0 \]
are both negative by virtue of \eqref{survival}.  Therefore, $\mathbf{E}_W=(0,W_{\sharp})$ is locally asymptotically stable (nodal attractor).

On the other hand, direct evaluation of the Jacobian matrix \eqref{Jacobiano} in the coexistence steady state $\mathbf{E}_c=(N_c,W_c)$ defined by \eqref{coex} results in a rather cumbersome approach.
 However, let us recall that $\det \Big(\mathbb{J}(N_c,W_c) \Big)= \lambda_1^c \lambda_2^c$ where $\lambda_i^c, i=1,2$ denote two eigenvalues of $\mathbb{J}(N_c,W_c)$. Therefore, to prove that the coexistence equilibrium $\mathbf{E}_c=(N_c,W_c)$ is unstable, it is sufficient to show that  $ \det \Big( \mathbb{J} (N_c, W_c) \Big)$ is strictly negative. In effect,
\begin{align*}
\det \Big( \J (N_c,W_c) \Big) &= \left[ \rho_N - \alpha_N - \rho_N \frac{W_c^2}{(N_c+W_c)^2} - \beta_N(W_c+2N_c) \right] \cdot \Big[ \rho_W - \alpha_W - \beta_W (N_c+2W_c) \Big] \\  &-\beta_W W_c N_c \left[ \beta_N + \rho_N \frac{N_c}{(N_c+W_c)^2} \right] \qquad \Leftarrow \left\{ \begin{array}{l} N_c + W_c=W_{\sharp} \\ \rho_N - \alpha_N = \beta_N N_{\sharp} \\ \rho_W - \alpha_W = \beta_W W_{\sharp} \end{array} \right\} \\
&= - \beta_W W_c \left[ \beta_N(N_{\sharp} - W_{\sharp} - N_c) - \rho_N \left( \frac{W_c}{W_{\sharp}} \right)^2 \right] - \beta_W W_c \left[ \beta_N N_c + \rho_N \left( \frac{N_c}{W_{\sharp}} \right)^2 \right] \\
&=-\beta_W W_c \left[ \beta_N (N_{\sharp} - W_{\sharp}) + \frac{\rho_N}{W_{\sharp}^2} \big( N_c^2 - W_c^2 \big)\right] \qquad \Leftarrow \left\{ \rho_N = \beta_N \frac{W_{\sharp}}{W_c} (N_{\sharp} - W_{\sharp}) \right\} \\
&= - \beta_W \beta_N \Big[  W_c (N_{\sharp} - W_{\sharp}) + (N_{\sharp} - W_{\sharp}) (N_c - W_c)\Big] = - \beta_W \beta_N (N_{\sharp} - W_{\sharp}) N_c < 0.
\end{align*}
Thus, we have $\det \Big( \J (N_c,W_c) \Big) = \lambda_1^c \lambda_2^c < 0$ meaning that $\lambda_1^c$ and $\lambda_2^c$ have opposite signs. Therefore, the coexistence equilibrium $\mathbf{E}_c=(N_c,W_c)$ is unstable (saddle point).

To show that $\mathbf{E}_0=(0,0)$ is repelling, we cannot merely substitute its coordinates in the Jacobian matrix \eqref{Jacobiano} since $\mathbb{J}(0,0)$ cannot be directly computed at this steady state. Let us now recall (see Remark \ref{rem1}) that the origin is a self-contained invariant set and that the system trajectories engendered by $W(0) =0$ and $N(0)>0$ (which can be arbitrarily small) are attracted to $\mathbf{E}_N$, while the trajectories engendered by $N(0) =0$ and $W(0)>0$ (which can also be arbitrarily small) are attracted to $\mathbf{E}_W$. Thus, every vicinity of $\mathbf{E}_0$ contains initial conditions from which the system trajectories move away from the origin $\mathbf{E}_0=(0,0)$ in the direction of either $\mathbf{E}_N$ or $\mathbf{E}_W$. This means that $\mathbf{E}_W$ is a source. This completes the proof of Proposition \ref{prop2} and also establishes the validity of the condition (\ref{good4}) presented on p. \pageref{good4}.
\end{proof}

In the next section, we explore monotonicity and other important properties of the reduced system \eqref{sys}.

\section{Monotonicity and other important properties of the model}
\label{sec-monot}

To show that a certain dynamical system is monotone, it should be recalled that a dynamical system is called monotone when the flow generated by this system preserves some partial order \cite{Smith1995}. Therefore, first, we define the partial order induced by a convex cone $\mathcal{K} \subset \mathbb{R}^2$ which is typically a quadrant of $\mathbb{R}^2$ for bidimensional dynamical systems \cite{Hsu1996,Smith2017}. Given two elements $\mathbf{X}=(x_1,x_2) \in \mathcal{K}$ and $\mathbf{Y}=(y_1,y_2) \in \mathcal{K}$, we write:
\begin{itemize}
  \item $\mathbf{X} \leq_{\mathcal{K}} \mathbf{Y}$ if $\mathbf{Y} - \mathbf{X} \in \mathcal{K}$;
  \item $\mathbf{X} <_{\mathcal{K}} \mathbf{Y}$ if $\mathbf{Y} - \mathbf{X} \in \mathcal{K}\setminus {\mathbf{0}}$;
  \item $\mathbf{X} \ll_{\mathcal{K}} \mathbf{Y}$ if $\mathbf{Y} - \mathbf{X} \in \text{Int }\mathcal{K}$.
\end{itemize}
Partial order defined by the last two items is also referred to as ``strict order'' and ``strong order'', respectively.

The sets
\[ \rrbracket \mathbf{X}, \mathbf{Y}\llbracket_{\mathcal{K}} := \Big\{ \mathbf{Z} \in \mathbb{R}_+^2: \  \mathbf{X}  <_{\mathcal{K}} \mathbf{Z}  <_{\mathcal{K}} \mathbf{Y} \Big\}, \qquad \llbracket \mathbf{X}, \mathbf{Y} \rrbracket_{\mathcal{K}} := \Big\{ \mathbf{Z} \in \mathbb{R}_+^2: \  \mathbf{X}  \leq_{\mathcal{K}} \mathbf{Z}  \leq_{\mathcal{K}} \mathbf{Y} \Big\} \]
are referred to as \emph{order intervals} (open and closed, respectively) induced by the cone $\mathcal{K}$.

Let $\mathbf{X} \in \mathbb{R}_+^2$ such that $\mathbf{X}=(x_1,x_2):=(N,W)$ defines the state vector of the system \eqref{sys}. Denote as $\mathbf{F}: \mathbb{R}_+^2 \mapsto \mathbb{R}_+^2, \mathbf{F}:= \big( F(\mathbf{X}), G(\mathbf{X}) \big)$ the vector field whose components $F$ and $G$ are defined in \eqref{sys} as scalar functions of $\mathbf{X}=(N,W)$. Using these notations, our system \eqref{sys} can be written as
\begin{equation}
\label{sysF}
 \frac{d \mathbf{X}}{dt} = \mathbf{F}(\mathbf{X})
\end{equation}
and its solution, engendered by an initial condition $\mathbf{X}_0 = \big( N(0), W(0) \big) \in \mathbb{R}_+^2$, can be denoted as $\Phi(t,\mathbf{X}_0 )$.

The positive semiflow of the system \eqref{sys} or \eqref{sysF} generated by the vector field $\mathbf{F}(\mathbf{X})$ is then the continuous mapping defined by $\Phi: \ \R_+ \times \R_+^2 \mapsto \times \R_+^2$, where $\Phi(t,\mathbf{X} )$ denotes the solution of \eqref{sysF} that satisfies $\mathbf{X}(0)=\mathbf{X}$. Here, we consider only the positive semiflow of the system \eqref{sysF}, that is, with $t \in [0, \infty)$, since we are interested in the system's behavior in forward time.

The semiflow $\Phi(t,\cdot)$ is called monotone (resp. strongly monotone) on a subset $A \subset \R^2_+$, with respect to partial order induced by the cone $\mathcal{K}$ if
\[ \Phi (t,\mathbf{X}) \leq_{\mathcal{K}} \Phi(t,\mathbf{Y}) \quad \Big( \text{resp.} \ \Phi (t,\mathbf{X}) \ll_{\mathcal{K}} \Phi (t,\mathbf{Y}) \ \Big) \quad \text{whenever} \quad \mathbf{X},~\mathbf{Y} \in A; ~~\mathbf{X} \leq_{\mathcal{K}} \mathbf{Y} ~~~ \Big( \text{resp.} \ \mathbf{X} <_{\mathcal{K}} \mathbf{Y} \ \Big) \quad \text{for all } \ t \geq 0. \]

Furthermore, the semiflow $\Phi(t,\cdot)$ is called \emph{strongly order-preserving} on $A \subset \R^2_+$ (or SOP, for briefness) if $\Phi (t,\cdot)$ is monotone on $A$ and, whenever $\mathbf{X} <_{\mathcal{K}} \mathbf{Y}$, there exist open neighborhoods  $\mathcal{V}_X$ of $\mathbf{X}$ and $\mathcal{V}_Y$ of $\mathbf{Y}$ such that
\[ \Phi(t,\mathcal{V}_X) \leq_{\mathcal{K}} \Phi (t,\mathcal{V}_Y) \quad \text{for } \ t >0. \]
In other words, the partial order induced by the cone $\mathcal{K}$ is preserved for every ordered pair $\hat{\mathbf{X}} \leq_{\mathcal{K}}  \hat{\mathbf{Y}}$, where $\hat{\mathbf{X}} \in \mathcal{V}_X,  \hat{\mathbf{Y}} \in \mathcal{V}_Y$ within these open vicinities $\mathcal{V}_X$ and $\mathcal{V}_Y$ for all $t >0.$

\begin{proposition}
\label{prop-SOP}
System \eqref{sys}, \eqref{sysF} is monotone in $\mathbb{R}_+^2$ and strongly order-preserving (SOP) in the interior of $\mathbb{R}_+^2$ with respect to the partial order induced by the cone $\mathcal{K}:=\mathbb{R}_+ \times \mathbb{R}_-$.
\end{proposition}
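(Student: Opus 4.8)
The plan is to verify the Kamke--Müller quasimonotonicity condition for the cone $\mathcal{K} = \mathbb{R}_+ \times \mathbb{R}_-$ directly, and then to upgrade to strong order preservation on the interior by an irreducibility argument on the Jacobian. Writing $\mathcal{K} = \{(x_1,x_2)\in\mathbb{R}^2:\ x_1 \ge 0,\ x_2 \le 0\}$, the standard criterion (see \cite{Smith1995}) asserts that the semiflow of \eqref{sysF} is monotone with respect to $\le_{\mathcal{K}}$ on a forward invariant set $A$ provided that, for all $\mathbf{X} = (N,W)$ and $\mathbf{Y}=(N',W')$ in $A$ with $\mathbf{X} \le_{\mathcal{K}} \mathbf{Y}$ (that is, $N'\ge N$ and $W'\le W$), one has $F(N,W) \le F(N,W')$ whenever $N=N'$, and $G(N,W) \ge G(N',W)$ whenever $W=W'$. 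Equivalently, $W \mapsto F(N,W)$ must be nonincreasing and $N \mapsto G(N,W)$ must be nonincreasing on $\mathbb{R}_+^2$.

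First I would check these one-variable monotonicity statements from the explicit form of the vector field. From \eqref{sys-N}, for $(N,W)\in\mathbb{R}_+^2$ with $N+W>0$,
\[ \frac{\partial F}{\partial W}(N,W) = -N\left(\beta_N + \rho_N\frac{N}{(N+W)^2}\right) \le 0, \]
while $F(0,W) \equiv 0$, so $F(N,\cdot)$ is nonincreasing on all of $\mathbb{R}_+^2$; likewise, from \eqref{sys-W},
\[ \frac{\partial G}{\partial N}(N,W) = -\beta_W W \le 0, \]
so $G(\cdot,W)$ is nonincreasing. Working with the Kamke condition rather than directly with $\mathbb{J}$ here is deliberate: it sidesteps the mild non-smoothness of $F$ at the origin caused by the factor $N/(N+W)$ (one checks that $\mathbf{F}$ is nonetheless globally Lipschitz on $\mathbb{R}_+^2$, so solutions are unique), since the criterion only needs continuity of $\mathbf{F}$ together with these one-variable monotonicities. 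As $\mathbb{R}_+^2$ is forward invariant by Proposition \ref{prop1}, the quasimonotonicity condition holds there, and therefore $\Phi(t,\cdot)$ is monotone on $\mathbb{R}_+^2$ with respect to the order induced by $\mathcal{K}$.

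Next I would establish the SOP property on $\mathrm{Int}\,\mathbb{R}_+^2$. By Remark \ref{rem1} the axes $\Omega_N$, $\Omega_W$ and the singleton $\Omega_0$ are invariant, so uniqueness of solutions makes $\mathrm{Int}\,\mathbb{R}_+^2$ forward invariant, and on this open set $\mathbf{F}$ is $C^1$. Conjugating \eqref{sysF} by $P := \mathrm{diag}(1,-1)$ --- which maps $\mathcal{K}$ onto $\mathbb{R}_+^2$ --- produces a system whose Jacobian is $P\,\mathbb{J}(N,W)\,P$, with off-diagonal entries
\[ -\,\mathbb{J}_{12}(N,W) = N\left(\beta_N + \rho_N\frac{N}{(N+W)^2}\right) > 0 \qquad \text{and} \qquad -\,\mathbb{J}_{21}(N,W) = \beta_W W > 0, \]
both strictly positive throughout $\mathrm{Int}\,\mathbb{R}_+^2$. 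Hence the conjugated system is cooperative and \emph{irreducible} at every point of the forward invariant open set $\mathrm{Int}\,\mathbb{R}_+^2$, so the classical theorem on irreducible cooperative systems in \cite{Smith1995} yields that $\Phi(t,\cdot)$ is strongly monotone on $\mathrm{Int}\,\mathbb{R}_+^2$; since strong monotonicity implies the SOP property, the proof would be complete.

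The step that I expect to need the most care is not a computation but the bookkeeping: ensuring that the quasimonotonicity and irreducibility criteria are invoked on genuinely forward invariant sets, and that the non-differentiability of $F$ at $\mathbf{E}_0$ does not obstruct the global monotonicity claim. This is exactly why I would prove monotonicity through the Kamke condition (which requires only continuity --- indeed local Lipschitzness for uniqueness --- of $\mathbf{F}$ plus the one-variable monotonicities of $F$ and $G$) and reserve the Jacobian-irreducibility argument for $\mathrm{Int}\,\mathbb{R}_+^2$, where $\mathbf{F}$ is smooth.
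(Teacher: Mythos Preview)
Your proposal is correct and follows essentially the same route as the paper: conjugate by $P=\mathrm{diag}(1,-1)$, check that the off-diagonal Jacobian entries have the right sign (the paper phrases this as ``$P\,\mathbb{J}\,P$ is Metzler'', you phrase it as the Kamke condition, but the underlying computation is identical), and then invoke irreducibility on $\mathrm{Int}\,\mathbb{R}_+^2$ to upgrade to strong monotonicity and hence SOP. The one substantive addition in your write-up is the explicit attention to the non-differentiability of $F$ at $\mathbf{E}_0$, which the paper glosses over by asserting the Metzler property ``for all $(N,W)\in\mathbb{R}_+^2$'' even though $\mathbb{J}(0,0)$ is not defined; your use of the Kamke form for the monotonicity half is a cleaner way to handle that boundary point.
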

\begin{proof}
It is easy to see that the partial order induced by $\mathcal{K}:=\mathbb{R}_+ \times \mathbb{R}_-$ is related to the ``standard'' order (induced by $\mathbb{R}_+^2$) in the following sense. For any two elements $\mathbf{X}=(x_1,x_2) \in \mathbb{R}_+^2$ and $\mathbf{Y}=(y_1,y_2) \in \mathbb{R}_+^2$, it is fulfilled that
\[ \mathbf{X} \leq_{\mathcal{K}} \mathbf{Y} \quad \text{if and only if} \quad x_1 \leq y_1 \quad \text{and} \quad x_2 \geq y_2. \]
A similar statement also holds with ``$\ll_{\mathcal{K}}$'' replacing ``$\leq_{\mathcal{K}}$'' and ``$<$'' replacing ``$\leq$''.

Using the idea of \emph{order isomorphism} (for more details, see \cite[Section 3.5]{Smith1995}) induced by the diagonal matrix
\[ \mathbb{P}:= \left[ \begin{matrix}1 & 0 \\  0 & -1 \end{matrix} \right], \qquad \mathbb{P}=\mathbb{P}^{-1}=\mathbb{P}^{T} \]
it is immediate to conclude that
\[ \mathbf{X} \leq_{\mathcal{K}} \mathbf{Y} \quad \text{if and only if} \quad \mathbb{P} \cdot \mathbf{X}  \leq \mathbb{P} \cdot \mathbf{Y}. \]
According to \cite{Angeli2003,Smith1995}, a simple change of variables
\[ \mathbf{Z} := \mathbb{P} \cdot \mathbf{X} = \mathbb{P} \cdot \left[ \begin{array}{c}  N \\ W \end{array} \right] = \left[ \begin{array}{r}  N \\ -W \end{array} \right] \quad \text{and} \quad  \mathbb{P} \cdot \mathbf{F} (\mathbf{X}) = \mathbb{P} \cdot \mathbf{F} ( \mathbb{P} \cdot \mathbf{Z}) \]
leads to the \emph{cooperative} dynamical system
\begin{equation}
\label{sysZ}
\frac{d \mathbf{Z}}{dt} = \mathbb{P} \cdot \mathbf{F} (\mathbb{P} \cdot \mathbf{Z})
\end{equation}
which is monotone with respect to the ``standard'' order (induced by $\mathbb{R}_+^2$). Furthermore, the Jacobian of this system $\mathbb{J}(\mathbf{Z})$ is linked to the Jacobian $\mathbb{J} (N,W)$ of \eqref{sys}, \eqref{sysF} by the relationship
\[ \mathbb{J}(\mathbf{Z}) = \mathbb{P} \cdot \mathbb{J} (\mathbf{X})  \cdot \mathbb{P} \]
and is a Metzler matrix
\begin{equation}
\label{JacobianoZ}
\mathbb{P} \cdot \mathbb{J} (N, W)  \cdot \mathbb{P} = \left[ \begin{matrix} \rho_N\left(1-\dfrac{W^2}{(N+W)^2}\right) - \alpha_N - \beta_N(W+2N) & N\left(\beta_N + \rho_N\dfrac{N}{(N+W)^2} \right)\\
& \\
\beta_W W & \rho_W - \alpha_W - \beta_W(N+2W)
\end{matrix}\right]
\end{equation}
for all $(N,W) \in \mathbb{R}_+^2$. The latter implies that the semiflow generated by \eqref{sysZ} preserves the ``standard'' order (induced by $\mathbb{R}_+^2$), while the semiflow $\Phi(t,\cdot)$ generated by the system \eqref{sys}, \eqref{sysF} preserves the partial order induced by the cone $\mathcal{K}:=\mathbb{R}_+ \times \mathbb{R}_-$ for all $(N,W) \in \mathbb{R}_+^2$. Moreover, the Jacobian matrix \eqref{JacobianoZ} is irreducible whenever $N \neq 0, W \neq 0.$ Therefore, the system \eqref{sys}, \eqref{sysF} is strongly order-preserving in the interior of $\mathbb{R}_+^2$ in accordance with the results from \cite[Chapter 4]{Smith1995}.
\end{proof}

Let us recall the \emph{basin of attraction} of a locally asymptotically stable equilibrium $\mathbf{X}^{*}$ for the dynamical system \eqref{sysF} is the set of initial conditions $\mathbf{X}_0$ such that the solutions $\Phi\big( t; \mathbf{X}_0 \big)$ engendered by $\mathbf{X}_0$ converge to $\mathbf{X}^{*}$ as $t \to \infty$. According to Proposition \ref{prop2}, our system \eqref{sys} possesses two local attractors, $\mathbf{E}_N=\big( N_{\sharp}, 0 \big)$ and $\mathbf{E}_W=\big( 0, W_{\sharp} \big)$, whose respective basins of attraction can be written as
\begin{subequations}
\label{basins}
\begin{align}
\label{basinN}
\mathcal{B}_{\mathbf{E}_N} & := \Big\{ (N,W) \in \mathbb{R}_+^2: \ \lim \limits_{t \to \infty} \big( N(t), W(t) \big) = \big( N_{\sharp}, 0 \big) \Big\} \\
\label{basinW}
\mathcal{B}_{\mathbf{E}_W} & := \Big\{ (N,W) \in \mathbb{R}_+^2: \ \lim \limits_{t \to \infty} \big( N(t), W(t) \big) = \big( 0, W_{\sharp} \big) \Big\}
\end{align}
\end{subequations}
where the limits are understood in the ``componentwise'' sense.

To determine sets included in the above basins of attraction, we introduce the following lemma obtained from Proposition \ref{prop-SOP}.

\begin{lemma}\label{lemma:sop-interval}
Systems \eqref{sys} and \eqref{sysF} are  strongly order-preserving (SOP) on $ \llbracket \mathbf{E}_N, \mathbf{E}_W \rrbracket_{\mathcal{K}}$ with respect to the partial order induced by the cone $\mathcal{K}:=\mathbb{R}_+ \times \mathbb{R}_-$.
\end{lemma}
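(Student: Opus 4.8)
The plan is to obtain the Lemma as a corollary of Proposition~\ref{prop-SOP} together with the invariance statements of Proposition~\ref{prop1} and Remark~\ref{rem1}. The first step is to identify the order interval explicitly. Since $\mathbf{E}_W=(0,W_{\sharp})$ is the $\leq_{\mathcal{K}}$-minimum and $\mathbf{E}_N=(N_{\sharp},0)$ the $\leq_{\mathcal{K}}$-maximum of the rectangle $[0,N_{\sharp}]\times[0,W_{\sharp}]$, a point $(N,W)$ lies $\leq_{\mathcal{K}}$-between them precisely when $0\le N\le N_{\sharp}$ and $0\le W\le W_{\sharp}$; hence the order interval $\llbracket \mathbf{E}_N,\mathbf{E}_W\rrbracket_{\mathcal{K}}$ coincides with the compact absorbing set $\mathcal{X}$ of \eqref{ab-set}. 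By Proposition~\ref{prop1} this set is positively invariant, so the semiflow $\Phi(t,\cdot)$ of \eqref{sys}--\eqref{sysF} restricts to a semiflow on $\mathcal{X}$ whose trajectories coincide with those of the full system; in particular the monotonicity of $\Phi(t,\cdot)$ on $\mathbb{R}_+^2$ established in Proposition~\ref{prop-SOP} immediately yields monotonicity on $\mathcal{X}$.

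For the strong order-preserving property I would argue according to the location of the ordered pair $\mathbf{X}<_{\mathcal{K}}\mathbf{Y}$ in $\mathcal{X}$. If both $\mathbf{X}$ and $\mathbf{Y}$ belong to $\operatorname{Int}\mathbb{R}_+^2$ --- this already includes the ``outer'' faces $\{N=N_{\sharp}\}$ and $\{W=W_{\sharp}\}$ of $\mathcal{X}$ --- then Proposition~\ref{prop-SOP} supplies neighborhoods $\mathcal{V}_X,\mathcal{V}_Y$ with the required order relation, and intersecting them with $\mathcal{X}$ gives neighborhoods relative to $\mathcal{X}$ that still work. The only genuinely new situation is when $\mathbf{X}$ or $\mathbf{Y}$ sits on a coordinate axis, i.e. in $(\Omega_N\cup\Omega_W\cup\{\mathbf{E}_0\})\cap\mathcal{X}$ (this also covers the equilibria $\mathbf{E}_N$ and $\mathbf{E}_W$). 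Here I would use that $\Omega_N$ and $\Omega_W$ are invariant (Remark~\ref{rem1}): because of the shape of $\leq_{\mathcal{K}}$, a $\leq_{\mathcal{K}}$-ordered pair whose lower endpoint lies on the $N$-axis (respectively, whose upper endpoint lies on the $W$-axis) must have both endpoints on that same axis, and there the dynamics of the pair collapses to the scalar logistic equation \eqref{log-N} (respectively, \eqref{log-W}), whose flow is strictly order-preserving; the remaining mixed configurations follow from this together with the monotonicity of the full system on small $\mathcal{X}$-neighborhoods of $\mathbf{X}$ and $\mathbf{Y}$.

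The step that needs the most care is precisely the one involving the invariant coordinate-axis segments: there the transformed Metzler Jacobian \eqref{JacobianoZ} is no longer irreducible, so strong monotonicity is unavailable and Proposition~\ref{prop-SOP} cannot be invoked directly --- one has to fall back on the invariance of $\Omega_N$, $\Omega_W$ and on the order-preserving character of the one-dimensional logistic flow in order to patch the SOP property across $\partial\mathcal{X}$. Everything else --- identifying the order interval with $\mathcal{X}$, transferring monotonicity, and reducing the interior pairs (including those on the outer faces) to Proposition~\ref{prop-SOP} --- is routine bookkeeping.
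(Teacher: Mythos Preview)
Your proposal is correct and follows essentially the same route as the paper's own proof: SOP in the interior via Proposition~\ref{prop-SOP}, invariance of the coordinate axes from Remark~\ref{rem1} together with the one-dimensional logistic flows \eqref{log-N}--\eqref{log-W} for ordered pairs lying on a common axis, and then a patching argument for the mixed boundary/interior configurations. The only difference is that the paper disposes of the mixed case by citing \cite[Remark~5.1.1]{Smith1995}, whereas your treatment of that step (``follows from monotonicity on small $\mathcal{X}$-neighborhoods'') is left somewhat informal; inserting an interior point strictly between $\mathbf{X}$ and $\mathbf{Y}$ and invoking strong monotonicity there would make your version self-contained.
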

\begin{proof}
We follow the proof scheme used in \cite[Theorem 5]{Bliman2018}, using the fact that strong monotonicity implies the SOP property, as indicated in  \cite[Proposition 1.1.1]{Smith1995}. First, from Proposition \ref{prop1}, it can be easily deduced that systems \eqref{sys} and \eqref{sysF} are positively invariant in the interior of $\mathbb{R}_+^2$. Furthermore, from Remark \ref{rem1}, the sets $\mathbb{R}_+\times \{0\}$, $\{0\} \times \mathbb{R}_+$ and $\{0\}\times\{0\}$ are invariant as well. Proposition \ref{prop-SOP} proves that the flow is SOP in the interior of $\mathbb{R}_+^2$ which is invariant. In each invariant set $\mathbb{R}_+\times \{0\}$ and $\{0\} \times \mathbb{R}_+$, systems \eqref{sys} and \eqref{sysF} are reduced to a quadratic equation, equivalent to a logistic equation growth in each case. In each of these sets, in the order relation restricted to each invariant set, the flow is strongly monotone, and thus, the flow is also SOP in these sets. The condition holds trivially in $\{0\}\times\{0\}$ given that this point is a steady state.

Finally, having proven that the flow is SOP for initial conditions in each of the invariant sets, from \cite[Remark 5.1.1]{Smith1995}, we can deduce that given an initial condition in one of these sets (i.e., in the border of $\R^2_+$) and another initial condition in the interior of $\mathbb{R}_+^2$, strong monotonicity is also preserved, which concludes the proof that the flow is SOP on $ \llbracket \mathbf{E}_N, \mathbf{E}_W \rrbracket_{\mathcal{K}}$ because this interval is a subset of $\R^2_+$.
\end{proof}

The combination of results established by Propositions \ref{prop2} and \ref{prop-SOP} and Lemma \ref{lemma:sop-interval} together provide an essential basis for applying the fundamental \emph{Order Interval Trichotomy Theorem} (see, e.g., \cite[Theorem 2.2.2]{Smith1995}) to the order interval $ \llbracket \mathbf{E}_N, \mathbf{E}_W \rrbracket_{\mathcal{K}}$ and two subintervals it contains. Let us recall the formulation of this theorem.
\begin{theorem}[\cite{Smith1995}, Theorem 2.2.2]
\label{teor1}
Let the semiflow $\Phi(t,\cdot)$ of the dynamical system given in general form \eqref{sysF} be SOP with respect to the partial order induced by some cone $\mathcal{C}$ on the order interval $\llbracket \mathbf{X}^{*}, \mathbf{Y}^{*}\rrbracket_{\mathcal{C}}$ where $ \mathbf{X}^{*} <_{\mathcal{C}} \mathbf{Y}^{*}$ and $ \mathbf{X}^{*}, \mathbf{Y}^{*}$ are equilibria of \eqref{sysF}. If $\overline{\Phi \big(t, \llbracket \mathbf{X}^{*}, \mathbf{Y}^{*}\rrbracket_{\mathcal{C}} \big)}$ is compact for each $t>0$, then one of the following holds:
\begin{description}
  \item[(i)]
  There exists another equilibrium  $\mathbf{Z}^{*} \in \rrbracket \mathbf{X}^{*}, \mathbf{Y}^{*}\llbracket_{\mathcal{C}}$ of the system \eqref{sysF}.
  \item[(ii)]
  For any $ \mathbf{X}_0 \in \llbracket \mathbf{X}^{*}, \mathbf{Y}^{*}\rrbracket_{\mathcal{C}} \setminus \{ \mathbf{Y}^{*} \}$, all solutions $ \Phi \big( t; \mathbf{X}_0 \big)$ are attracted to $\mathbf{X}^{*}$, that is,  $\lim \limits_{t \to \infty} \Phi \big( t; \mathbf{X}_0 \big) = \mathbf{X}^{*}.$
  \item[(iii)]
  For any $ \mathbf{X}_0 \in \llbracket \mathbf{X}^{*}, \mathbf{Y}^{*}\rrbracket_{\mathcal{C}} \setminus \{ \mathbf{X}^{*} \}$, all solutions $ \Phi \big( t; \mathbf{X}_0 \big)$ are attracted to $\mathbf{Y}^{*}$, that is,  $\lim \limits_{t \to \infty} \Phi \big( t; \mathbf{X}_0 \big) = \mathbf{Y}^{*}.$
\end{description}
\end{theorem}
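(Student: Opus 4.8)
This final statement is the \emph{Order Interval Trichotomy Theorem}, a cornerstone of the theory of monotone semiflows; in the present paper it is invoked verbatim from \cite[Theorem 2.2.2]{Smith1995} rather than proved. Should one wish to reconstruct its proof, the plan is to reduce it to three standard building blocks of monotone dynamics: (a) positive invariance of the order interval $\llbracket \mathbf{X}^{*},\mathbf{Y}^{*}\rrbracket_{\mathcal{C}}$; (b) the fact that, under the compactness hypothesis on $\overline{\Phi(t,\llbracket \mathbf{X}^{*},\mathbf{Y}^{*}\rrbracket_{\mathcal{C}})}$, every $\omega$-limit set of a trajectory started in the interval is nonempty, compact, connected, and invariant, and possesses an infimum and a supremum (the ambient cone induces a lattice structure) that are themselves equilibria of \eqref{sysF}; and (c) the \emph{monotone convergence criterion}, which states that an orbit admitting a time $T>0$ with $\Phi(T,\mathbf{X}_0)\ge_{\mathcal{C}}\mathbf{X}_0$ (or $\Phi(T,\mathbf{X}_0)\le_{\mathcal{C}}\mathbf{X}_0$) converges to an equilibrium as $t\to\infty$. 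Each of these is available for SOP semiflows with relatively compact orbits.

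With these in hand, I would first observe that $\llbracket \mathbf{X}^{*},\mathbf{Y}^{*}\rrbracket_{\mathcal{C}}$ is positively invariant: for $\mathbf{X}_0$ in this interval, monotonicity together with $\Phi(t,\mathbf{X}^{*})=\mathbf{X}^{*}$ and $\Phi(t,\mathbf{Y}^{*})=\mathbf{Y}^{*}$ yields $\mathbf{X}^{*}\le_{\mathcal{C}}\Phi(t,\mathbf{X}_0)\le_{\mathcal{C}}\mathbf{Y}^{*}$ for all $t\ge 0$, so that $\omega(\mathbf{X}_0)\subset\llbracket \mathbf{X}^{*},\mathbf{Y}^{*}\rrbracket_{\mathcal{C}}$ and is well behaved by (b). Fixing such an $\mathbf{X}_0$, I would then analyze $\omega(\mathbf{X}_0)$: using invariance of $\omega(\mathbf{X}_0)$ and the strong order preservation, one shows that $\inf\omega(\mathbf{X}_0)$ and $\sup\omega(\mathbf{X}_0)$ belong to $\omega(\mathbf{X}_0)$ and are equilibria; moreover, if $\inf\omega(\mathbf{X}_0)=\mathbf{X}^{*}$ a stability/SOP argument forces $\omega(\mathbf{X}_0)=\{\mathbf{X}^{*}\}$, and symmetrically with $\mathbf{Y}^{*}$. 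Hence, either $\omega(\mathbf{X}_0)$ is one of the two endpoints, or $\mathbf{X}^{*}<_{\mathcal{C}}\inf\omega(\mathbf{X}_0)\le_{\mathcal{C}}\sup\omega(\mathbf{X}_0)<_{\mathcal{C}}\mathbf{Y}^{*}$, which exhibits an equilibrium in $\rrbracket\mathbf{X}^{*},\mathbf{Y}^{*}\llbracket_{\mathcal{C}}$, i.e., alternative \textbf{(i)}.

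It remains to rule out a ``mixed'' behavior when \textbf{(i)} fails. Assuming there is no equilibrium in the open interval, the previous step shows that $\llbracket \mathbf{X}^{*},\mathbf{Y}^{*}\rrbracket_{\mathcal{C}}$ is the disjoint union of $B_{-}=\{\mathbf{X}_0:\omega(\mathbf{X}_0)=\{\mathbf{X}^{*}\}\}$ and $B_{+}=\{\mathbf{X}_0:\omega(\mathbf{X}_0)=\{\mathbf{Y}^{*}\}\}$, and one must deduce that one of these coincides with $\llbracket \mathbf{X}^{*},\mathbf{Y}^{*}\rrbracket_{\mathcal{C}}\setminus\{\mathbf{Y}^{*}\}$ or with $\llbracket \mathbf{X}^{*},\mathbf{Y}^{*}\rrbracket_{\mathcal{C}}\setminus\{\mathbf{X}^{*}\}$, which are precisely alternatives \textbf{(ii)} and \textbf{(iii)}. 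The argument combines monotonicity (so that $B_{-}$ is a ``lower'' set and $B_{+}$ an ``upper'' set for $\le_{\mathcal{C}}$), the openness properties of the basins inherited from SOP, and connectedness of the order interval to exclude any nontrivial partition. I expect \textbf{this last step to be the main obstacle}: passing from the pointwise dichotomy of $\omega$-limit sets to the global ``all-or-nothing'' conclusion is exactly where the \emph{strong} order-preserving hypothesis (as opposed to mere monotonicity) becomes indispensable, and it is the part requiring the most delicate handling of neighborhoods and strict order relations.
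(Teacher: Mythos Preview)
Your reading is correct: the paper does not prove Theorem~\ref{teor1} at all but merely \emph{recalls} its statement from \cite[Theorem~2.2.2]{Smith1995} in order to apply it to the order intervals $\llbracket \mathbf{E}_N,\mathbf{E}_c\rrbracket_{\mathcal{K}}$ and $\llbracket \mathbf{E}_c,\mathbf{E}_W\rrbracket_{\mathcal{K}}$. There is therefore no ``paper's proof'' to compare against, and your opening sentence already captures the situation accurately.

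As for the reconstruction you sketch, it follows the standard architecture used in Smith's monograph: positive invariance of the order interval by monotonicity against the fixed endpoints, nonemptiness and compactness of $\omega$-limit sets from the orbit-compactness hypothesis, the Nonordering Principle and Limit Set Dichotomy (which in Smith's development do the work you attribute to ``$\inf\omega$ and $\sup\omega$ are equilibria''), and finally the SOP-based argument that excludes a nontrivial split of the interval into two basins when no interior equilibrium exists. Your identification of the last step as the delicate one is apt; in Smith's proof this is handled via the Limit Set Dichotomy (Theorem~1.4.2 there) together with Lemma~2.2.1, rather than a direct inf/sup-of-$\omega$ argument, but the underlying mechanism---SOP providing open neighborhoods that propagate strict order forward in time---is exactly what you describe.
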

To adapt the hypotheses of this theorem to the system \eqref{sys} with $\mathcal{C}=\mathcal{K}$,~$\mathbf{X}^{*}=\mathbf{E}_N$, and $\mathbf{Y}^{*}=\mathbf{E}_W$, it must be shown that $\Phi \big(t, \llbracket \mathbf{E}_N, \mathbf{E}_W \rrbracket_{\mathcal{K}} \big)$ has compact closure in $\mathbb{R}_+^2$. In this context, it is instructive to note that any orbit
\begin{equation}
\label{orbit}
 \mathcal{O} (N,W):= \Big\{ \big( N(t), W(t) \big) \in \mathbb{R}_+^2: \ t \geq 0 \Big\}
 \end{equation}
of \eqref{sys} has a compact closure due to the existence of the absorbing set $\mathcal{X} \subset \mathbb{R}_+^2$ (given by \eqref{ab-set}) which, in effect, coincides with the order interval $ \llbracket \mathbf{E}_N, \mathbf{E}_W\rrbracket_{\mathcal{K}} :=[0,N_{\sharp}] \times [0,W_{\sharp}]$.

It is clear that direct application of Theorem \ref{teor1} to the order interval $\llbracket \mathbf{E}_N, \mathbf{E}_W \rrbracket_{\mathcal{K}}$ reaffirms, via item (i), the existence of $\mathbf{E}_c=(N_c,W_c)$ such that $\mathbf{E}_N <_{\mathcal{K}} \mathbf{E}_c <_{\mathcal{K}} \mathbf{E}_W$. On the other hand, the following proposition relates the two order subintervals of $\llbracket \mathbf{E}_N, \mathbf{E}_W \rrbracket_{\mathcal{K}}$ with the two basins of attraction $\mathcal{B}_{\mathbf{E}_N}$ and $\mathcal{B}_{\mathbf{E}_W}$ of the boundary equilibria $\mathbf{E}_N$ and $\mathbf{E}_W$ defined by \eqref{basins}.

\begin{figure}[t]
\begin{center}
\scalebox{1}{
\includegraphics{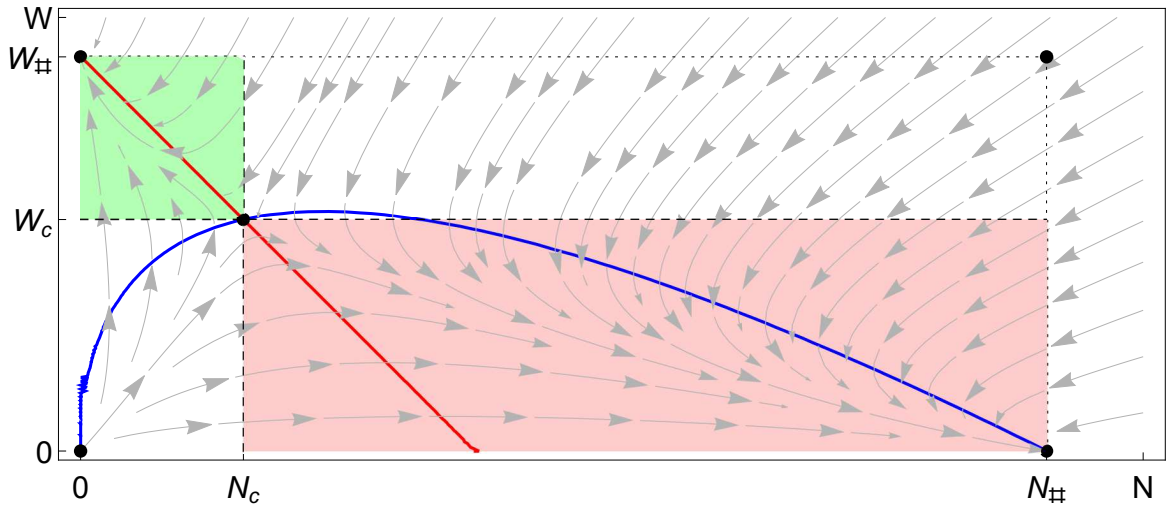}}
\end{center}
\caption{Phase portrait of the dynamical system \eqref{sys} with four equilibria $\textbf{E}_0, \textbf{E}_N, \textbf{E}_W,$ and $\textbf{E}_c,$ joined by  $N$-nullcline (blue-colored curve) and $W$-nullcline (red-colored curve) and the attraction regions of boundary equilibria $\textbf{E}_N, \textbf{E}_W$ defined by the two order intervals in Proposition \ref{prop4}.}
\label{fig-phase1}
\end{figure}

\begin{proposition}
\label{prop4}
For the system \eqref{sys}, it is fulfilled that
\[ \llbracket \mathbf{E}_N, \mathbf{E}_c\llbracket_{\mathcal{K}}  \subset \mathcal{B}_{\mathbf{E}_N}, \qquad  \rrbracket \mathbf{E}_c, \mathbf{E}_W\rrbracket_{\mathcal{K}}  \subset \mathcal{B}_{\mathbf{E}_W}, \]
where
\[ \llbracket \mathbf{E}_N, \mathbf{E}_c \llbracket_{\mathcal{K}} := \llbracket \mathbf{E}_N, \mathbf{E}_c \rrbracket_{\mathcal{K}} \setminus \{ \mathbf{E}_c \}, \qquad \rrbracket \mathbf{E}_c, \mathbf{E}_W\rrbracket_{\mathcal{K}} := \llbracket \mathbf{E}_c, \mathbf{E}_W \rrbracket_{\mathcal{K}} \setminus \{ \mathbf{E}_c \}. \]
\end{proposition}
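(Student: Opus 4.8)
The plan is to invoke the Order Interval Trichotomy Theorem (Theorem~\ref{teor1}) twice, once on each of the two order subintervals $\llbracket \mathbf{E}_N, \mathbf{E}_c \rrbracket_{\mathcal{K}}$ and $\llbracket \mathbf{E}_c, \mathbf{E}_W \rrbracket_{\mathcal{K}}$, taking $\mathcal{C}=\mathcal{K}$. Both subintervals are nondegenerate because $\mathbf{E}_N <_{\mathcal{K}} \mathbf{E}_c <_{\mathcal{K}} \mathbf{E}_W$, and their endpoints are equilibria of \eqref{sys}. Since $\mathbf{E}_N \leq_{\mathcal{K}} \mathbf{E}_c \leq_{\mathcal{K}} \mathbf{E}_W$, transitivity of the order shows that each subinterval is contained in $\llbracket \mathbf{E}_N, \mathbf{E}_W \rrbracket_{\mathcal{K}}=\mathcal{X}$; moreover, because monotonicity (Proposition~\ref{prop-SOP}) keeps any orbit that starts in the order interval between two equilibria trapped in that same interval for all $t\geq 0$, each subinterval is forward invariant. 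Consequently, the SOP property proved in Lemma~\ref{lemma:sop-interval} restricts to each subinterval, and the forward image $\Phi(t,\cdot)$ of either subinterval has closure contained in the compact absorbing set $\mathcal{X}$, hence is relatively compact for every $t>0$. Thus all hypotheses of Theorem~\ref{teor1} hold, and for each subinterval exactly one of the alternatives (i)--(iii) is realized.

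The next step is to discard, in each case, the two alternatives incompatible with Proposition~\ref{prop2}. Consider $\llbracket \mathbf{E}_N, \mathbf{E}_c \rrbracket_{\mathcal{K}}$. Alternative (i) would produce a fifth equilibrium inside the open order interval $\rrbracket \mathbf{E}_N, \mathbf{E}_c \llbracket_{\mathcal{K}}$; but \eqref{sys} has exactly the four equilibria $\mathbf{E}_0,\mathbf{E}_N,\mathbf{E}_W,\mathbf{E}_c$, the points $\mathbf{E}_N,\mathbf{E}_c$ are the endpoints (hence excluded from the open interval), and a direct comparison of coordinates shows that neither $\mathbf{E}_0$ nor $\mathbf{E}_W$ fulfils $\mathbf{E}_N <_{\mathcal{K}} \cdot <_{\mathcal{K}} \mathbf{E}_c$, so (i) is impossible. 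Alternative (iii) would force every point of $\llbracket \mathbf{E}_N, \mathbf{E}_c \rrbracket_{\mathcal{K}} \setminus \{\mathbf{E}_N\}$ to converge to $\mathbf{E}_c$; this contradicts the local asymptotic stability of $\mathbf{E}_N$ granted by Proposition~\ref{prop2}, since a neighbourhood of $\mathbf{E}_N$ all of whose orbits tend to $\mathbf{E}_N$ necessarily meets the nondegenerate interval in points other than $\mathbf{E}_N$. Hence alternative (ii) must hold: every solution starting at a point of $\llbracket \mathbf{E}_N, \mathbf{E}_c \rrbracket_{\mathcal{K}} \setminus \{\mathbf{E}_c\}=\llbracket \mathbf{E}_N, \mathbf{E}_c \llbracket_{\mathcal{K}}$ converges to $\mathbf{E}_N$, that is, $\llbracket \mathbf{E}_N, \mathbf{E}_c \llbracket_{\mathcal{K}}\subset \mathcal{B}_{\mathbf{E}_N}$.

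The treatment of $\llbracket \mathbf{E}_c, \mathbf{E}_W \rrbracket_{\mathcal{K}}$ is the mirror image: alternative (i) is excluded exactly as before by inspecting the coordinates of $\mathbf{E}_0$ and $\mathbf{E}_N$ relative to $\mathbf{E}_c$ and $\mathbf{E}_W$, and alternative (ii) is excluded because it would contradict the local asymptotic stability of $\mathbf{E}_W$. Therefore alternative (iii) holds, giving that every solution starting in $\llbracket \mathbf{E}_c, \mathbf{E}_W \rrbracket_{\mathcal{K}} \setminus \{\mathbf{E}_c\}=\rrbracket \mathbf{E}_c, \mathbf{E}_W \rrbracket_{\mathcal{K}}$ converges to $\mathbf{E}_W$, that is, $\rrbracket \mathbf{E}_c, \mathbf{E}_W \rrbracket_{\mathcal{K}} \subset \mathcal{B}_{\mathbf{E}_W}$. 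This establishes both inclusions.

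I expect the main obstacle to be not the trichotomy argument itself but the bookkeeping needed to certify that Theorem~\ref{teor1} is genuinely applicable on the subintervals -- concretely, that the SOP property of Lemma~\ref{lemma:sop-interval} descends to each subinterval (which hinges on the subinterval being forward invariant, so that the open neighbourhoods appearing in the definition of SOP may be taken within it) and that relative compactness of the forward images is inherited from the absorbing set $\mathcal{X}$. Once this setup is secured, ruling out a fifth equilibrium in the open subintervals and ruling out the ``wrong'' convergence alternative via the stability information in Proposition~\ref{prop2} are routine.
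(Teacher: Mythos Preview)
Your proposal is correct and follows essentially the same approach as the paper: both apply Theorem~\ref{teor1} to each of the two subintervals $\llbracket \mathbf{E}_N, \mathbf{E}_c \rrbracket_{\mathcal{K}}$ and $\llbracket \mathbf{E}_c, \mathbf{E}_W \rrbracket_{\mathcal{K}}$, rule out alternative~(i) by noting there is no further equilibrium in the open subintervals, and select the correct remaining alternative using the local asymptotic stability of $\mathbf{E}_N$ and $\mathbf{E}_W$ from Proposition~\ref{prop2}. Your write-up is simply more explicit than the paper's about verifying forward invariance of the subintervals and the inheritance of the SOP and compactness hypotheses.
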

\begin{proof}
Dynamical system \eqref{sys} fulfills the hypotheses of  Theorem \ref{teor1}, which will be applied to the order intervals $\llbracket \mathbf{E}_N, \mathbf{E}_c \rrbracket_{\mathcal{K}}$ and $\llbracket \mathbf{E}_c, \mathbf{E}_W \rrbracket_{\mathcal{K}}$, sets included in $ \llbracket \mathbf{E}_N, \mathbf{E}_W \rrbracket_{\mathcal{K}}$ where the flow is SOP (see Lemma \ref{lemma:sop-interval}). It is immediate to check that there is no equilibrium point inside the order interval $\llbracket \mathbf{E}_N, \mathbf{E}_c \rrbracket_{\mathcal{K}}$, and $\mathbf{E}_N$ is an attractor. By virtue of item (ii) of Theorem \ref{teor1}, all orbits $\mathcal{O} (N,W)$ started in $\llbracket \mathbf{E}_N, \mathbf{E}_c\llbracket_{\mathcal{K}}$ are attracted to $\mathbf{E}_N$. Therefore, $\llbracket \mathbf{E}_N, \mathbf{E}_c \llbracket_{\mathcal{K}}$ belongs to $\mathcal{B}_{\mathbf{E}_N}$. A similar rationale applies to the order interval $\rrbracket \mathbf{E}_c, \mathbf{E}_W \rrbracket_{\mathcal{K}}$ making use of item (iii) of  Theorem \ref{teor1}.
\end{proof}

Figure \ref{fig-phase1} displays order intervals $\llbracket \mathbf{E}_N, \mathbf{E}_c \rrbracket_{\mathcal{K}}$ and $\llbracket \mathbf{E}_c, \mathbf{E}_W \rrbracket_{\mathcal{K}}$ as pink and green rectangles, respectively. The pink rectangle belongs to $\mathcal{B}_{\mathbf{E}_N} \cap \mathcal{X}$ while the green one lies inside $\mathcal{B}_{\mathbf{E}_W} \cap \mathcal{X}$, that is, each order subinterval is included in  the intersection of the underlying basin of attraction and the absorbing set $\mathcal{X}$. The unmarked areas of $\mathcal{X}$ contain points $\mathbf{X}=(N,W)$ that may belong to either $\mathcal{B}_{\mathbf{E}_N}$ or $\mathcal{B}_{\mathbf{E}_W}$.

The following result establishes further properties of the reduced system \eqref{sys} related to its bistability.

\begin{proposition}
\label{prop5}
Dynamical system \eqref{sys} exhibits the saddle-point behavior and there exists an invariant threshold manifold that passes through the positive steady state $\mathbf{E}_c$ separating the attraction basins $\mathcal{B}_{\mathbf{E}_N}$ and $\mathcal{B}_{\mathbf{E}_W}$.
\end{proposition}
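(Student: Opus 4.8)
\emph{Plan of proof.} The saddle-point behaviour itself is already contained in Proposition~\ref{prop2}: $\mathbf{E}_N$ and $\mathbf{E}_W$ are local attractors, $\mathbf{E}_0$ is a repeller, and at $\mathbf{E}_c$ the Jacobian has a strictly negative determinant, so $\mathbf{E}_c$ is a hyperbolic saddle. What requires work is the construction of the threshold manifold, and the plan is to obtain it as the stable set of $\mathbf{E}_c$ after describing the whole phase portrait.

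First I would establish global convergence. The change of variables of Proposition~\ref{prop-SOP} turns \eqref{sys} into the cooperative, interior-irreducible planar system \eqref{sysZ}; since planar cooperative systems have no nonconstant periodic orbits, the Poincar\'e--Bendixson theorem together with the dissipativity of Proposition~\ref{prop1} forces every trajectory of \eqref{sys} to converge to one of $\mathbf{E}_0,\mathbf{E}_N,\mathbf{E}_W,\mathbf{E}_c$ (a heteroclinic cycle is impossible, because $\mathbf{E}_N,\mathbf{E}_W$ are sinks and $\mathbf{E}_0$ is a source). Since the basin of the repeller $\mathbf{E}_0$ is the singleton $\{\mathbf{E}_0\}$, the complement
\[ \mathcal{M}\ :=\ \mathbb{R}_+^2\setminus\big(\mathcal{B}_{\mathbf{E}_N}\cup\mathcal{B}_{\mathbf{E}_W}\big)\ =\ W^{s}(\mathbf{E}_c)\cup\{\mathbf{E}_0\} \]
(with $W^{s}(\mathbf{E}_c)$ the stable set of $\mathbf{E}_c$) is closed, forward invariant, and contains $\mathbf{E}_c$; by Proposition~\ref{prop4} the two basins sit on opposite $\mathcal{K}$-sides of it.

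Next I would show that $\mathcal{M}$ carries no order. Using monotonicity and the fact that $\mathbf{E}_N$ is the $\mathcal{K}$-largest and $\mathbf{E}_W$ the $\mathcal{K}$-smallest equilibrium, one checks that $\mathcal{B}_{\mathbf{E}_N}$ is upward-closed and $\mathcal{B}_{\mathbf{E}_W}$ downward-closed for $\leq_{\mathcal{K}}$: if $\mathbf{a}\in\mathcal{B}_{\mathbf{E}_N}$ and $\mathbf{a}\leq_{\mathcal{K}}\mathbf{b}$, then $\Phi(t,\mathbf{a})\leq_{\mathcal{K}}\Phi(t,\mathbf{b})$ for all $t\geq0$, so $\lim_{t\to\infty}\Phi(t,\mathbf{b})$ is an equilibrium $\geq_{\mathcal{K}}\mathbf{E}_N$ and hence equals $\mathbf{E}_N$. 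Therefore $\mathcal{M}$ cannot contain two $\mathcal{K}$-comparable points $\mathbf{X}<_{\mathcal{K}}\mathbf{Y}$: the whole order interval $\llbracket\mathbf{X},\mathbf{Y}\rrbracket_{\mathcal{K}}$ would then lie in $\mathcal{M}$ (a point of it belonging to $\mathcal{B}_{\mathbf{E}_N}$ drags $\mathbf{Y}$ into $\mathcal{B}_{\mathbf{E}_N}$, and one in $\mathcal{B}_{\mathbf{E}_W}$ drags $\mathbf{X}$ into $\mathcal{B}_{\mathbf{E}_W}$), which is impossible because that interval has nonempty interior when $\mathbf{X}\ll_{\mathcal{K}}\mathbf{Y}$, whereas $\mathcal{M}\setminus\{\mathbf{E}_0\}=W^{s}(\mathbf{E}_c)$ is one-dimensional by the stable manifold theorem; the remaining case $\mathbf{Y}-\mathbf{X}\in\partial\mathcal{K}\setminus\{\mathbf{0}\}$, i.e.\ a coordinate segment inside $\mathcal{M}$, is discarded because such a segment would have to lie on a nullcline of \eqref{sys}, hence on a coordinate axis, i.e.\ inside $\mathcal{B}_{\mathbf{E}_N}$ or $\mathcal{B}_{\mathbf{E}_W}$. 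Since a totally unordered subset of the plane relative to a quadrant cone is the graph of a monotone (hence Lipschitz) function of one variable, $\mathcal{M}$ is a one-dimensional invariant Lipschitz curve through $\mathbf{E}_c$; by construction $\mathcal{M}=\partial\mathcal{B}_{\mathbf{E}_N}=\partial\mathcal{B}_{\mathbf{E}_W}$, so it is the sought separatrix, and together with Proposition~\ref{prop2} this is exactly the asserted saddle-point behaviour with an invariant threshold manifold.

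The delicate point is this last step — proving that $\mathcal{M}$ genuinely has dimension one. Monotonicity does the conceptual work by reducing the question to the interiors of order intervals, but cleanly eliminating the lower-dimensional (segment) degeneracies and handling the non-hyperbolic equilibrium $\mathbf{E}_0$ (at which the Jacobian \eqref{Jacobiano} is not even defined, and near which $\mathbb{R}_+^2$ lies entirely on one $\mathcal{K}$-side) requires the nullcline and invariance arguments indicated above; alternatively, one may package the whole of the two preceding steps by invoking the general theory of saddle-point behaviour for competitive planar systems, e.g.\ \cite{Jiang2004,Smith1995}.
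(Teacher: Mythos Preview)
Your argument is correct and, interestingly, more constructive than the paper's. The paper does not build the separatrix by hand: it simply verifies the four structural hypotheses (H1)--(H4) of \cite[Theorem~3.2]{Smith2017} (strict order preservation and order-compactness of the semiflow, repulsivity of $\mathbf{E}_0$, invariance of the coordinate axes, and strong monotonicity from the interior) and then invokes that theorem as a black box to obtain the unordered invariant set $\mathcal{S}_{\mathbf{E}_c}=\mathbb{R}_+^2\setminus(\mathcal{B}_{\mathbf{E}_N}\cup\mathcal{B}_{\mathbf{E}_W})$. This is precisely the ``alternative'' you mention in your final sentence.

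Your route---ruling out periodic orbits via planar cooperativity and Poincar\'e--Bendixson, identifying $\mathcal{M}$ with $W^{s}(\mathbf{E}_c)\cup\{\mathbf{E}_0\}$, and then using the upward/downward $\mathcal{K}$-closure of the basins to force unorderedness---essentially reproves the relevant piece of \cite{Jiang2004,Smith2017} in this concrete setting. What you gain is transparency: one sees exactly why the threshold is a monotone Lipschitz graph. What the paper gains is economy and robustness: the degenerate boundary case $\mathbf{Y}-\mathbf{X}\in\partial\mathcal{K}$ and the singular point $\mathbf{E}_0$ (where $\mathbb{J}$ is undefined) require the somewhat ad~hoc nullcline/invariance patches you flag as delicate, whereas the cited theorem absorbs those technicalities once (H1)--(H4) are checked. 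Both approaches are valid; yours is closer in spirit to a direct phase-plane analysis, the paper's to the monotone-systems framework it has been building throughout Section~\ref{sec-monot}.
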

\begin{proof}
Notably, according to \cite{Jiang2004}, a dynamical system is said to admit a ``saddle-point behavior'' if it possesses two locally stable equilibria on the boundary of the state domain and one unstable (saddle-point) equilibrium in the interior of the state domain. Furthermore, the state domain of the system can be divided into three disjoint and invariant parts: two attraction basins (each containing one stable equilibrium on the boundary) and the so-called ``threshold'' manifold containing the unstable equilibrium that separates the attraction basins of two locally stable equilibria. Such a manifold is also referred to as the \emph{separatrix} of two attraction basins.

The saddle-point behavior of the system \eqref{sys} will be shown by applying the result summarized by H. Smith \cite[Theorem 3.2]{Smith2017}. For that purpose, we establish the cogency of four necessary hypotheses:
\begin{enumerate}[(H1)]
  \item
  The semiflow $\Phi(t,\cdot)$ generated by the system \eqref{sys} is strictly order-preserving on $\mathbb{R}^2_+$ with respect to $<_{\mathcal{K}}$ and order-compact for each $t>0$.
  \item
  The origin $\mathbf{E}_0=(0,0)$ is a repelling equilibrium.
  \item
  All orbits originated on the boundaries of $\mathbb{R}_+^2$ are confined to these boundaries.
  \item
  If $\mathbf{X}, \mathbf{Y} \in \mathbb{R}_+^2$ satisfy $\mathbf{X} <_{\mathcal{K}} \mathbf{Y}$ and either $\mathbf{X}$ or $\mathbf{Y}$ belongs to Int $\mathbb{R}_+^2$, then $\Phi(t,\mathbf{X}) \ll_{\mathcal{K}} \Phi(t,\mathbf{Y})$ for $t>0$. If $\mathbf{X}=(X_1,X_2) \in \mathbb{R}_+^2$ satisfies $X_i \neq 0, i=1,2,$ then $\Phi(t,\mathbf{X}) \in$ Int $\mathbb{R}_+^2$ for $t>0$.
\end{enumerate}

To show the validity of (H1), we recall the statement of Proposition \ref{prop-SOP} according to which the semiflow $\Phi(t,\cdot)$ generated by the system \eqref{sys} is SOP in the interior of  $\mathbb{R}_+^2$ and, therefore, it is also strictly order-preserving in Int $\mathbb{R}_+^2$. Strict monotonicity of $\Phi(t,\cdot)$ on the borders of $\mathbb{R}_+^2$ (which are, in fact, invariant sets $\Omega_N$ and $\Omega_W$, see Remark \ref{rem1}) follows from strict monotonicity of solutions of logistic equations \eqref{log-N} and \eqref{log-W}. Furthermore, the semiflow $\Phi(t,\cdot)$ generated by the system \eqref{sys} is order-compact since any orbit \eqref{orbit} of \eqref{sys} has a compact closure due to the existence of the absorbing set \eqref{ab-set}.

The hypothesis (H2) is cogent by virtue of Proposition \ref{prop2}, and the hypothesis (H3) is justified by invariance of the boundaries (sets  $\Omega_N$ and $\Omega_W$, see Remark \ref{rem1}).

Finally, the hypothesis (H4) is corroborated by the positiveness of the system trajectories engendered by positive initial conditions (see Proposition \ref{prop1}) along with the SOP property of the semiflow $\Phi(t,\cdot)$ on Int $\mathbb{R}_+^2$ (see Proposition \ref{prop-SOP}).

With the hypotheses (H1)-(H4) in force, we can now apply the result recaptured by H. Smith \cite[Theorem 3.2]{Smith2017} which basically affirms the following. If there is a unique equilibrium in $\mathbf{E}_c \in \rrbracket \mathbf{E}_N, \mathbf{E}_W \llbracket_{\mathcal{K}} \: \cap \; \text{Int }\mathbb{R}_+^2$ and it is a saddle point, then there exists an unordered positively invariant set
\[ \mathcal{S}_{\mathbf{E}_c} := \mathbb{R}_+^2 \setminus \big( \mathcal{B}_{\mathbf{E}_N} \cup \mathcal{B}_{\mathbf{E}_W} \big) \]
that contains the unstable equilibria $\mathbf{E}_0, \mathbf{E}_c$ and consists of points $\mathbf{X}_s \in \text{Int }\mathbb{R}_+^2$  such that
\[ \lim \limits_{t \to \infty} \Phi \big(t, \mathbf{X}_s \big) = \mathbf{E}_c \quad \text{for all} \;\; \mathbf{X}_s \in \mathcal{S}_{\mathbf{E}_c}.
\]
This completes the proof of Proposition \ref{prop5} and establishes the saddle-point behavior of the system \eqref{sys}.
\end{proof}

\section{Practical applications and final remarks}
\label{sec-appl}

The thorough analysis of the reduced model \eqref{sys} performed in Sections \ref{sec-analisys} and \ref{sec-monot} provides very useful insights for practical applications. By recalling the Stable Manifold Theorem (see, e.g., \cite[p. 107]{Perko2013}), it can be concluded that the invariant ``threshold'' manifold $\mathcal{S}_{\mathbf{E}_c}$ is, in effect, the stable invariant manifold of the saddle point $\mathbf{E}_c$ that is tangent to the eigenvector generated by the negative eigenvalue of $\mathbb{J}(\mathbf{E}_c)$. Furthermore, there also exists the unstable invariant manifold of the saddle point $\mathbf{E}_c$ that is tangent to the eigenvector generated by the positive eigenvalue of $\mathbb{J}(\mathbf{E}_c)$. The unstable manifold contains two monotone heteroclinic orbits \cite{Smith1995} that connect the saddle point  $\mathbf{E}_c$ with two local attractors  $\mathbf{E}_N$ and  $\mathbf{E}_W$.

\begin{figure}[t]
\begin{center}
 \includegraphics[width=7cm]{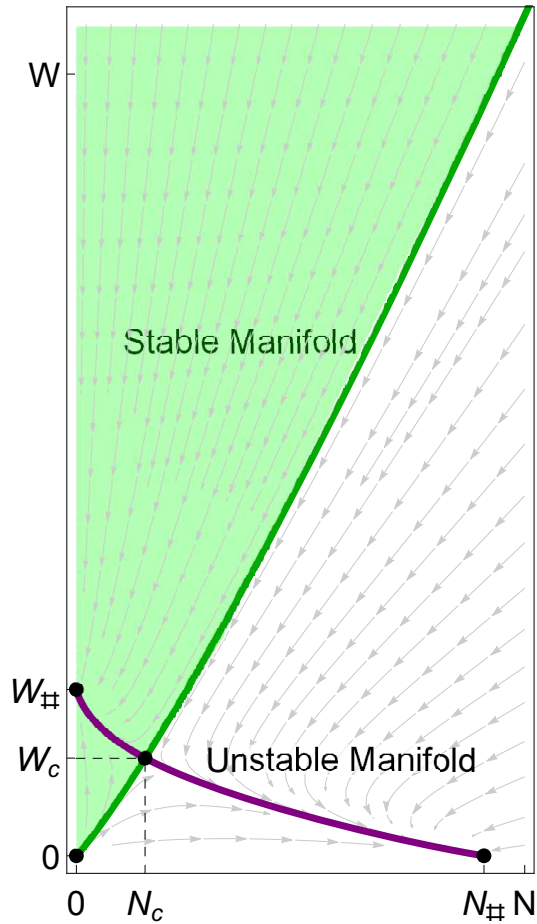}
\end{center}
\caption{Separatrix $\mathcal{S}_{\mathbf{E}_c}$ or the stable manifold of $\mathbf{E}_c$ (green curve) that separates the attraction basins $\mathcal{B}_{\mathbf{E}_N}$ and $\mathcal{B}_{\mathbf{E}_W}$ (green-shaded region) and the unstable manifold of $\mathbf{E}_c$ (purple curve) that connects three equilibria $\textbf{E}_N \leftarrow \textbf{E}_c \rightarrow  \textbf{E}_W$.}
\label{fig-phase2}
\end{figure}

Figure \ref{fig-phase2} displays the plots of the stable and unstable manifolds of  $\mathbf{E}_c$ (green and purple curves, respectively). It is clearly shown that the green curve plays the role of separatrix $\mathcal{S}_{\mathbf{E}_c}$ that divides $\mathbb{R}_+^2$ into two attraction basins $\mathcal{B}_{\mathbf{E}_N}$ (unshaded area) and $\mathcal{B}_{\mathbf{E}_W}$ (green-shaded region).

From the biological standpoint,  points in $ \mathcal{S}_{\mathbf{E}_c}$ indicate \emph{minimal viable population sizes} of wild and \textit{Wolbachia}-carrying mosquito populations that are related to the frequency-dependent Allee effect. In fact, if the initial  wild mosquito population size is $N_0 \ge 0$, one can compute the minimal viable population size  $\hat W_0=\hat W_0(N_0)$, such that $(N_0, \hat W_0) \in \mathcal{S}_{\mathbf{E}_c}$. Thus, if the initial  \textit{Wolbachia}-carrying mosquito population $W_0$ is lower than the minimal viable population size  $\hat W_0$, then  $(N_0,W_0) \in \mathcal{B}_{\mathbf{E}_N}$ and the underlying solution $\big( N(t), W(t) \big)$ of the system \eqref{sys} is attracted to $\mathbf{E}_N=(N_{\sharp},0)$, meaning the ultimate persistence of the wild population and progressive extinction of the \textit{Wolbachia}-infected population.

Similarly, if the initial condition $(N_0,W_0)$ assigned to the system \eqref{sys} lies \emph{above} $\mathcal{S}_{\mathbf{E}_c}$, that is, $W_0 > \hat W_0$, it implies the initial size of the \textit{Wolbachia}-infected population $W_0$ exceeds its minimal viable population size $\hat{W}_0$. In such a case, we have that  $(N_0,W_0) \in \mathcal{B}_{\mathbf{E}_W}$ and the underlying solution $\big( N(t), W(t) \big)$ of the system \eqref{sys} is attracted to the desired equilibrium $\mathbf{E}_W=(0, W_{\sharp})$, meaning the ultimate persistence of the \textit{Wolbachia}-infected population and progressive extinction of the wild mosquito population.

\begin{table}[t]
\centering
\begin{tabular}{lll}
  \hline \\  [-0.5ex]
  \textbf{Description} & \textbf{Assumed value} &\textbf{References}  \\ [1.5ex]
  \hline \\ [-1.5ex]
Fecundity rate of uninfected insects & $\rho_N = 4.55$                         & \cite{Campo2018,Styer2007} \\
Fecundity rate of infected insects & $\rho_W = 0.5 \times \rho_N= 2.27$      & \cite{Campo2018,Dorigatti2018,McMeniman2009,McMeniman2010} \\
Natural mortality rate of uninfected insects & $\alpha_N = 0.03333$                    & \cite{Campo2018,Styer2007} \\
Natural mortality rate of infected insects& $\alpha_W = 2 \times \alpha_N =0.06666$ & \cite{Campo2018,Dorigatti2018,McMeniman2009,McMeniman2010}\\
Competition parameter  of uninfected insects & $\beta_N = 2.61258 \times 10^{-3}$      & fitted using data from \cite{Crain2011,Oliveira2017,Suh2013} \\
Competition parameter  of infected insects& $\beta_W = 3.12792 \times 10^{-3}$      & fitted using data from \cite{Crain2011,Oliveira2017,Suh2013} \\ [0.5ex]
  \hline
\end{tabular}
\caption{Parameter values for the reduced model \eqref{sys} corresponding to \textit{Aedes aegypti} mosquitoes and the \textit{wMelPop} strain of \textit{Wolbachia}.} \label{tab2}
\end{table}

To plot all figures presented in this paper, we have used the parameter values given in Table \ref{tab2}. These values correspond to the \textit{wMelPop} strain of \textit{Wolbachia}, which is regarded as the best one for controlling dengue infections among human individuals since it confers the most profound resistance to the replication of dengue virus in mosquitoes \cite{Dorigatti2018,Woolfit2013}. However, many scholars point out that the \textit{wMelPop} strain is associated with high ``fitness cost'' since it reduces female fecundity, the viability of eggs, and the lifespan of infected mosquitoes \cite{Dorigatti2018,McMeniman2010,Ritchie2015}. Therefore, the successful invasion of mosquitoes carrying the \textit{wMelPop} strain of \textit{Wolbachia} and  their durable persistence even in small detached localities appears to be a challenging task.

It is widely known that female mosquitoes are major transmitters of dengue and other vector-borne infections. When deliberately infected with \textit{Wolbachia}, they lose their vector competence by becoming far less capable of developing a viral load sufficient for transmission of the virus to human individuals. Due to this remarkable feature, \textit{Wolbachia}-based biocontrol of mosquito populations has recently emerged as a novel method for the prevention and control of vector-borne infections.

The ultimate goal of \textit{Wolbachia}-based biocontrol consists of seeking the eventual elimination of wild insects (capable of transmitting the virus to human individuals) by performing periodic releases of \textit{Wolbachia}-carrying mosquitoes in some determined localities initially populated by wild mosquitoes. The practical implementation of this method requires to mass-rear a massive quantity of \textit{Wolbachia}-infected insects for posterior releases, and the desired result is propelled by the progressive \textit{Wolbachia} invasion and its durable establishment in wild mosquito populations. The final outcome of this process is usually referred to as ``population replacement''.

Let us now provide some useful insights and practical interpretations derived from Figure \ref{fig-phase2} while keeping in mind the primary goal of \textit{Wolbachia}-based biocontrol.  First, we recall that the attraction basin $\mathcal{B}_{\mathbf{E}_W}$ contains the initial conditions $\big( N(0),W(0) \big)$ starting from which the trajectories of the system \eqref{sys} converge to the desired boundary equilibrium $\mathbf{E}_W=(0,W_{\sharp})$ and the population replacement will be eventually achieved.

Suppose now that the population replacement is sought to be achieved with a single (or \emph{inundative}) initial release of \textit{Wolbachia}-carrying insects. To determine the size of such an abundant release, the current size of the wild mosquito population should be first assessed by some known technique \cite{Cianci2013,Gouagna2015}. Once the abundance of wild mosquitoes $N_0$ is fairly estimated, the information regarding the minimal viable sizes $\hat W_0=\hat W_0(N_0)$ such that $\big( N_0, \hat{W}_0 \big) \in \mathcal{S}_{\mathbf{E}_c} $ of \textit{Wolbachia}-carrying mosquito populations will be of the utmost importance, and this information is explicitly supplied by model \eqref{sys} and its underlying parameters.

Upon closer inspection of Figure \ref{fig-phase2}, we observe that the basin of attraction $\mathcal{B}_{\mathbf{E}_W}$ (green-shaded region) is much smaller than  $\mathcal{B}_{\mathbf{E}_N}$ (unshaded region). Therefore, when the size of the wild mosquito population is close to saturation or its carrying capacity $N_{\sharp}$, an extreme amount of \textit{Wolbachia}-carriers will be necessary for a single inundative release.

On the other hand, it is instructive to recall that wild mosquito populations may exhibit seasonal size variations \cite{Delatte2009,Gouagna2015}. In this context, the best timing for \textit{Wolbachia}-based biocontrol by a single inundative release will be the period of relatively low mosquito abundance. Such periods usually correlate with cooler and windier seasons in tropical and subtropical regions \cite{Gouagna2015} or arise after carefully planned and thoroughly implemented vector control measures \cite{Pliego2020}.

For different initial sizes $N_0 = \lambda N_{\sharp}$ of wild mosquito population expressed as fractions $\lambda \in \{0.25, 0.5, 0.75, 1\}$ of $N_{\sharp}$, one can estimate the corresponding minimal release sizes of \textit{Wolbachia}-carrying insects $\hat{W}_0=\hat{W}_0(\lambda N_{\sharp}) = \hat{\lambda} N_{\sharp}$ (also expressed as the multiplicatives of $N_{\sharp}$) that ensure the population replacement by a single inundative release. The corresponding values of $\lambda$ and $\hat{\lambda}$ are presented in Table \ref{table:oneshoot} (considering the parameters of Table \ref{tab2}), and the points $\Big( \lambda N_{\sharp}, \hat{\lambda} N_{\sharp} \Big) \in \mathcal{S}_{\mathbf{E}_c}$ are displayed in Figure \ref{fig-minWB} in red color.

\begin{figure}[t]
\begin{center}
 \includegraphics[width=7cm]{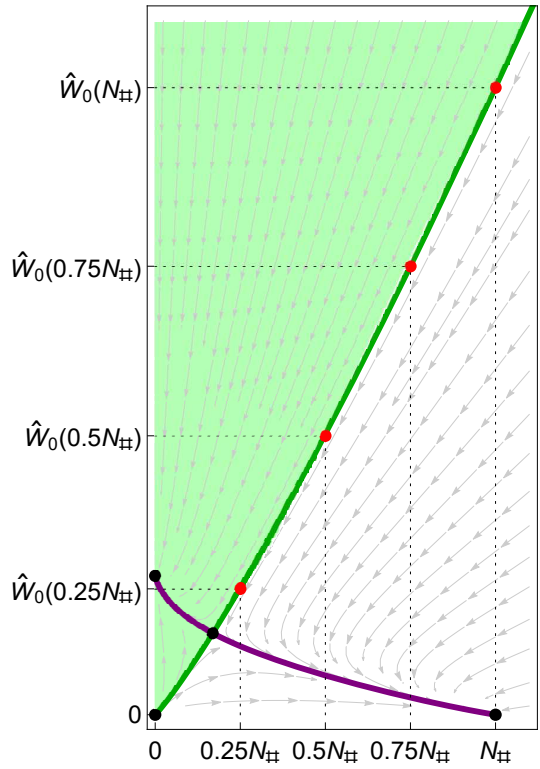}
\end{center}
\caption{Minimal \textit{Wolbachia}-infected population sizes $\hat W_0= \hat W_0 \big(\lambda N_{\sharp} \big)$ for different initial sizes $N_0=\lambda N_{\sharp}, \lambda \in \{0.25, 0.5, 0.75, 1\}$  of the wild mosquito population.}
\label{fig-minWB}
\end{figure}

\begin{table}[t]
\centering
\begin{tabular}{cc}
  \hline \\  [-0.5ex]
  \textbf{$\lambda$ (Initial wild population $N_0=\lambda N_{\sharp}$)} & \textbf{$\hat \lambda$ (Minimum viable population $\hat W_0= \hat {W}_0 \big( \lambda N_{\sharp} \big) = \hat{\lambda} N_{\sharp}$}   \\ [1.5ex]
  \hline \\ [-1.5ex]
0.25 & 0.38 \\
0.5 & 0.83 \\
0.75 & 1.32 \\
1 & 1.85 \\
  \hline
\end{tabular}
\caption{Minimal \textit{Wolbachia}-infected population size $\hat W_0= \hat W_0 \big(\lambda N_{\sharp} \big)$ for different initial sizes $N_0=\lambda N_{\sharp}, \lambda \in \{0.25, 0.5, 0.75, 1\}$  of the wild mosquito population, see Figure \ref{fig-minWB}.} \label{table:oneshoot}
\end{table}

From Table \ref{table:oneshoot} and Figure \ref{fig-minWB}, we observe that the minimal \textit{Wolbachia}-infected population sizes needed for single inundative releases always exceed the initial sizes of the wild mosquito population. When the initial size of the wild population is higher than $0.5 N_{\sharp}$, the population replacement becomes substantially more difficult to reach by a single inundative release since it requires to mass-rear en masse a vast quantity of \textit{Wolbachia}-carriers.

As an alternative to a single inundative release, one may perform several periodic (or \emph{inoculative}) releases. This strategy may seem reasonable if the  mass-rearing facility cannot produce the vast quantity $\hat{\lambda} N_{\sharp}$ of \textit{Wolbachia}-carriers en masse but is capable of producing a smaller quantity of \textit{Wolbachia}-carrying insects every $\tau$ days. However, under such a setting, the total amount of \textit{Wolbachia}-infected mosquitoes necessary to reach the population replacement will be larger than in the case of a single inundative release.

From the mathematical standpoint, periodic inoculative releases of \textit{Wolbachia}-carrying insects can be modeled by the following impulsive dynamical system:
\begin{subequations}
\label{sys-imp}
\begin{align}[left = \empheqlbrace\,]
\label{sys-imp-N}
& \frac{d N}{dt} = \rho_N N \left(\frac{N}{N+W} \right) - \alpha_N N - \beta_N N(N+W), & & N(0)=N_0 \\
\label{sys-imp-W}
& \frac{d W}{dt} = \rho_W W - \alpha_W W - \beta_W W(N+W), & & W(0)= \Lambda, \;\;\; W(i \tau^+) = W(i\tau^-) + \Lambda, \ i=1, 2, \ldots, n-1
\end{align}
\end{subequations}
where $\tau$ denotes the period of releases, $\Lambda=\text{const}$ stands the release size, and $n$ defines the number of releases. Notably, $W(i\tau^{\pm})$ denote the right and left limits of the function $W(t)$ at $t=i\tau$. Formal analysis of the impulsive system \eqref{sys-imp} is a challenging task and may be proposed as an object for further studies. Therefore, in the context of this paper, we limit ourselves to revising its numerical solutions in order to assess the practical value of periodic releases and to compare their overall performance with an outcome of a single inundative release.

By performing a series of numerical simulations, we have estimated the minimal release sizes $\Lambda:= \hat{\lambda} N_{\sharp}$ (also expressed as the multiplicatives of $N_{\sharp}$) for $\tau=1$ day and $\tau=3$ days and taking different initial sizes of wild mosquito population $N_0$, expressed as fractions $\lambda \in \{0.25, 0.5, 0.75, 1\}$ of $N_{\sharp}$.

\begin{table}[t]
\centering
\begin{tabular}{cccc}
  \hline \\  [-0.5ex]
  \textbf{$\lambda$ ($N_0=\lambda N_{\sharp}$)} & \textbf{$\hat \lambda$ ($\Lambda= \hat{\lambda} N_{\sharp},$ release size)}  &  \textbf{Period of releases ($\tau$ days)} & \textbf{Number of releases, $n$}  \\ [1.5ex]
  \hline \\ [-1.5ex]
0.25 & 0.25 & 1 & 5 \\
0.25 & 0.3615 & 3 & 3 \\
0.5 & 0.39 &  1 & 9 \\
0.5 & 0.773 &  3 & 3 \\
0.75 & 0.43 &  1 & 11 \\
0.75 & 1.178 &  3 & 4 \\
1 & 0.43 & 1 & 12 \\
1 & 1.39 & 3 & 8 \\
  \hline
\end{tabular}
\caption{For different initial sizes of the wild mosquito population (column 1), the release size of the \textit{Wolbachia}-carrying population (column 2) necessary to ensure the population replacement by inoculative releases each $\tau$ days (column 3) and the number of releases (column 4) are indicated.} \label{table:mshoot}
\end{table}

In Table  \ref{table:mshoot}, we present minimum release sizes $\Lambda$ that ensure the population replacement by $n$ periodic releases even when the initial condition $\big( N_0, \Lambda \big)$ lies inside the attraction basin $\mathcal{B}_{\mathbf{E}_N}$ of the boundary equilibrium $\mathbf{E}_N=(N_{\sharp},0)$ (that is, strictly below the separatrix $\mathcal{S}_{\mathbf{E}_c}$).  Revising the entries of Table \ref{table:mshoot}, it is easy to detect several patterns or ``tradeoffs'' between the frequency of releases $\tau$, constant release size $\Lambda$, and the overall number of releases $n$ needed to ensure the population replacement. Namely, more frequent releases ($\tau=1$) require smaller release sizes $\Lambda$ and shorter overall time of the release program but a greater number of releases $n$.  The latter is quite logical and not only aligns with common sense but also bears similarities with other works dealing with periodic releases of mosquitoes \cite{Bliman2019}. In this context, the anticipated knowledge of the production costs related to the mass-rearing of \textit{Wolbachia}-carrying insects and the logistics costs for performing field releases are important for choosing the release frequency. Although we have no reliable information regarding such costs, the impulsive system \eqref{sys-imp} may serve to be of potential utility in the future when healthcare entities eventually decide to evaluate this method of biological vector control.

On the other hand, the outcomes of numerical simulations performed on the original model \eqref{sys} can be also compared with those obtained for the impulsive system \eqref{sys-imp}. Contrasting the values of $\hat{\lambda}$ from Tables \ref{table:oneshoot} and \ref{table:mshoot} for the same values of $N_0=\lambda N_{\sharp}$, we observe that they bear a more striking difference for $\tau=1$ than for $\tau=3$. Moreover, the mentioned difference is smaller for the smaller values of $N_0$ (such as $0.25 N_{\sharp}$ and $0.5 N_{\sharp}$) and becomes more noticeable for the larger values of $N_0$ (such as $0.75 N_{\sharp}$ and $N_{\sharp}$). Thus, the release programs based on periodic inoculative releases seem more practicable when the initial size of the wild mosquito population is close to its saturation level $N_{\sharp}$.

\begin{figure}[t!]
\begin{center}
\begin{tabular}{cc}
 \includegraphics[width=0.45\textwidth]{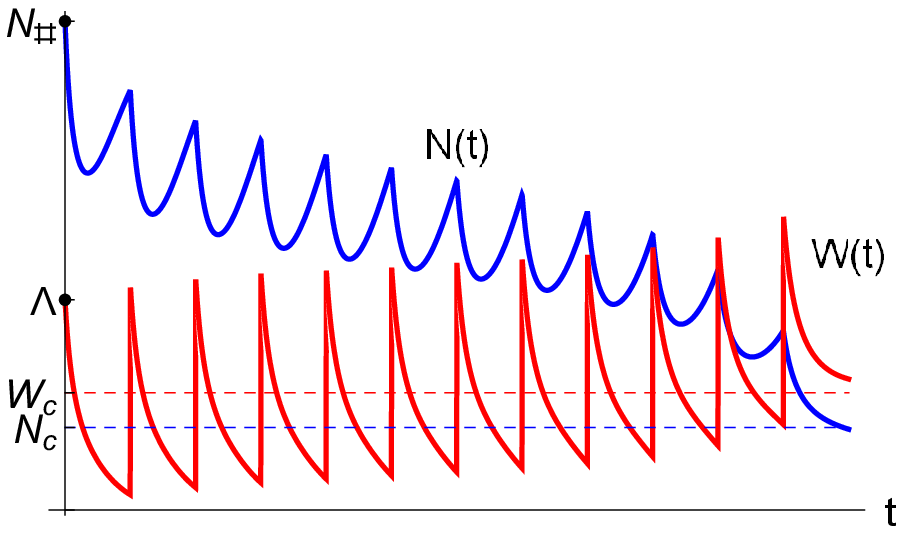} &  \includegraphics[width=0.45\textwidth]{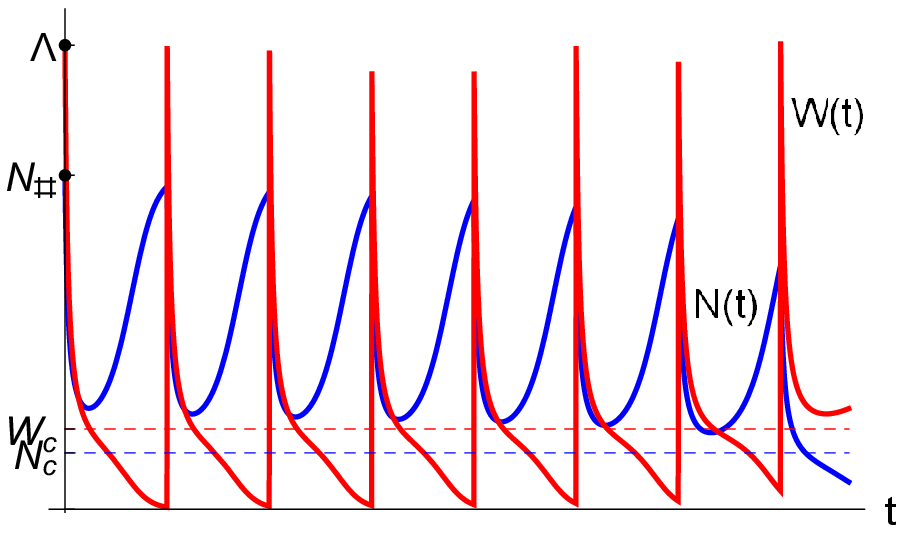} \\
 & \\
$N_0= N_{\sharp}, W_0 =0.43 N_{\sharp}$ (release size), $\tau=1, n=12$ & $N_0= N_{\sharp}, W_0 =1.39 N_{\sharp}$ (release size), $\tau=3, n=8$ \\
 & \\
\includegraphics[width=0.4\textwidth]{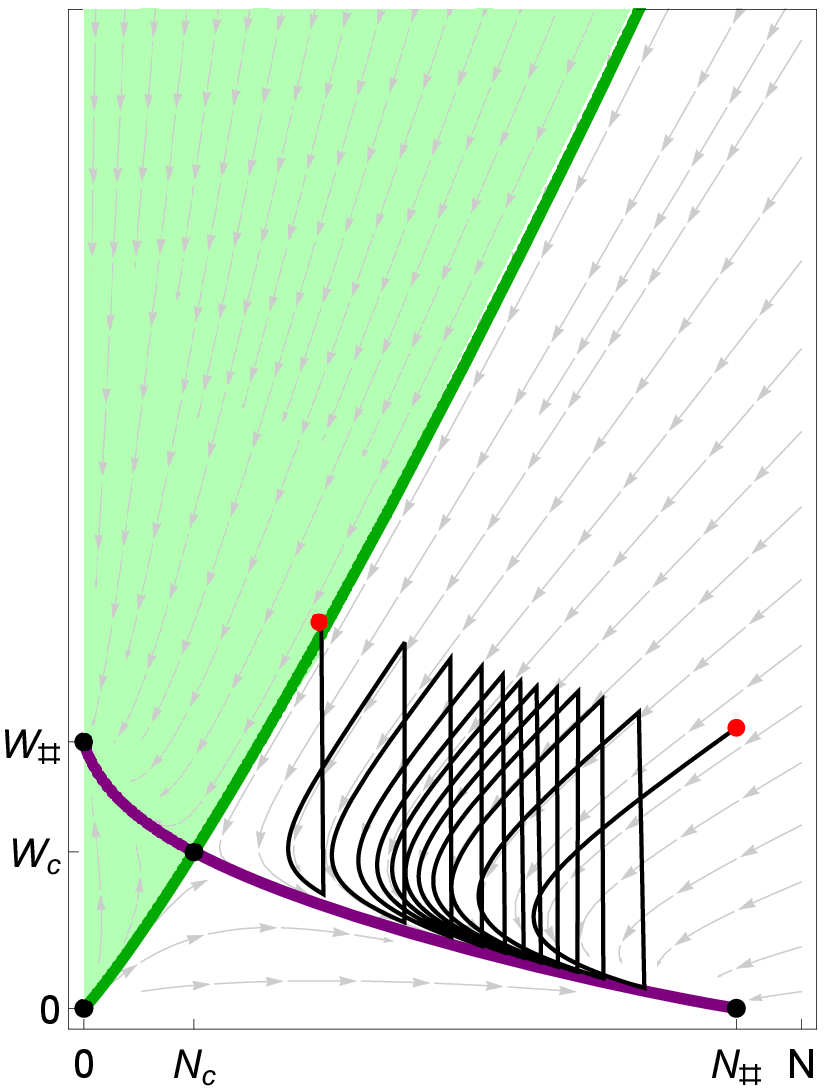}  &  \includegraphics[width=0.4\textwidth]{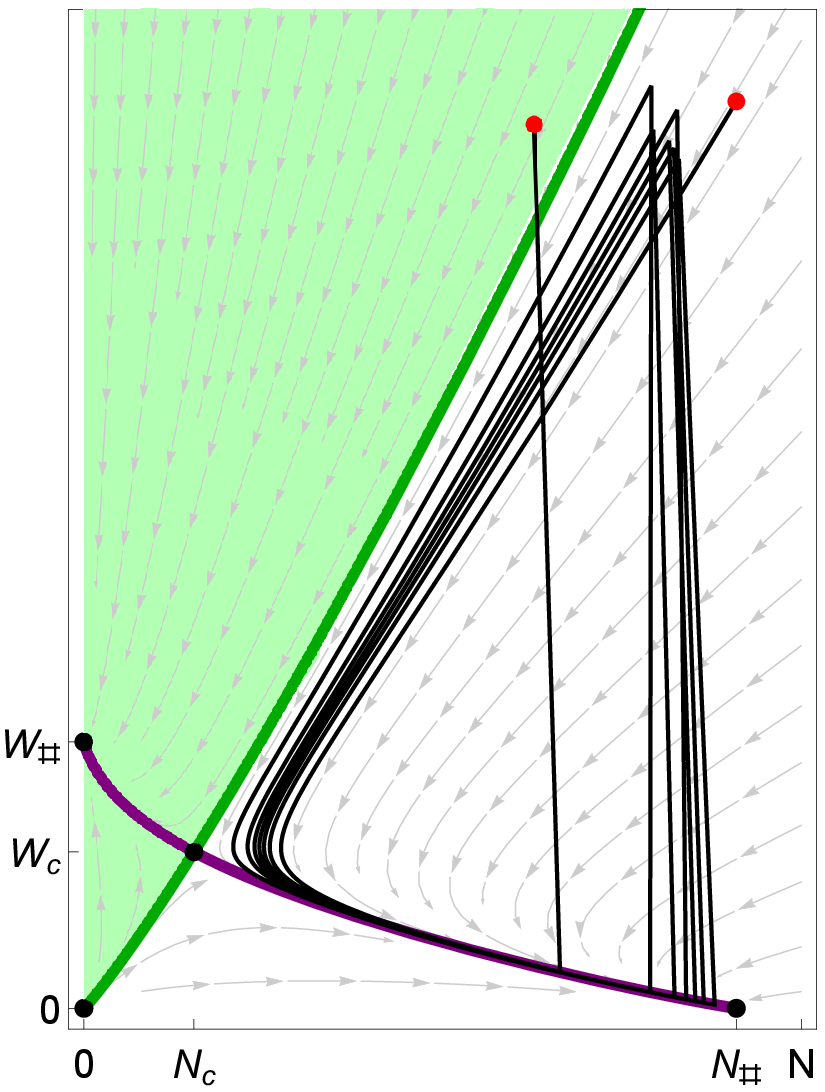} \\
 \end{tabular}
\end{center}
\caption{Trajectories $N(t), W(t)$ \emph{(upper row)} and the orbits in the phase space \emph{(lower row)} of the impulsive system \eqref{sys-imp} engendered by the initial conditions $N_0=N_{\sharp},  W_0= \Lambda =\hat{\lambda} N_{\sharp}$ corresponding to periodic releases with $\tau=1$ \emph{(left column}) and $\tau=3$ \emph{(right column)} for constant release sizes $\Lambda=\hat{\lambda} N_{\sharp}$ given in the last two rows of Table \ref{table:mshoot}.}
\label{fig-per}
\end{figure}

Figure \ref{fig-per} displays simulation results for the impulsive system \eqref{sys-imp} when the wild mosquito population is at saturation $N_0=N_{\sharp}$. The left column of Figure \ref{fig-per} corresponds to daily releases ($\tau=1$ day) and the right one corresponds to inoculative releases performed every three days ($\tau=3$). The upper charts of Figure \ref{fig-per} present the system's trajectories  $N(t)$ and $W(t)$ drawn by blue- and red-colored curves, respectively, and also bear two dashed lines marking the coordinates of the coexistence equilibrium $\mathbf{E}_c=\big(N_c,W_c)$. The lower charts exhibit the underlying parts of  orbits $\mathcal{O}(N,W) = \Big\{ \big( N(t),W(t) \big) \in \mathbb{R}_+^{2}: \ t \geq 0  \Big\}$ in the phase space that start in $(N_0,W_0) \in \mathcal{B}_{\mathbf{E}_N}$ (red-colored point below the separatrix $\mathcal{S}_{\mathbf{E}_c}$) and move the system states to the attraction basin $\mathcal{B}_{\mathbf{E}_W}$ of the desired boundary equilibrium $\mathbf{E}_W=(0,W_{\sharp})$ (red-colored point above the separatrix $\mathcal{S}_{\mathbf{E}_c}$).

The periodic inoculative releases are suspended when the orbit $\mathcal{O}(N,W)$ of \eqref{sys-imp} crosses the separatrix $\mathcal{S}_{\mathbf{E}_c}$ and enters the attraction basin $\mathcal{B}_{\mathbf{E}_W}$ (green-shaded region in the lower charts of Figure \ref{fig-per}). The latter is also clearly visible in the upper charts of Figure \ref{fig-per}: the system trajectory $N(t)$ decays and crosses the blue-colored dashed line, while the trajectory $W(t)$ remains strictly above the red-colored dashed line.

Browsing once again the simulation results given in Tables \ref{table:oneshoot} and \ref{table:mshoot} and contrasting them for each particular value of $N_0= \lambda N_{\sharp}, \lambda \in \{ 0.25,0.5,0.75, 1 \}$, we may conclude that, from the practical standpoint, an implementation of a single inundative release seems more rational and operative than several periodic inoculative releases. Effectively, under the ``worst scenario'', i.e., when $N_0=N_{\sharp}$ (this situation is illustrated in Figure \ref{fig-per}) it is necessary to mass-rear at least $5.16 N_{\sharp}$ of \textit{Wolbachia}-carriers during 12 days (with $\tau=1$) or at least $11.12 N_{\sharp}$ of \textit{Wolbachia}-carriers during 18 days (when $\tau=3$), while a single inundative release only requires to mass-rear $1.85 N_{\sharp}$ of \textit{Wolbachia}-infected insects, albeit all at once.

Thus, the reduced bidimensional model \eqref{sys} has resulted in a quite handy and easily interpretable tool for determining the appropriate size of a single inundative release or periodic releases of \textit{Wolbachia}-carrying insects since it explicitly yields the dependence between minimal viable population sizes of wild and \textit{Wolbachia}-infected mosquito populations.

\section*{Acknowledgments}

Diego Vicencio was supported by the program CONICYT PFCHA/Doctorado Becas Chile/2017-21171813 and FONDECYT grant N 1200355 ANID-Chile program. Olga Vasilieva acknowledges financial support from the National Fund for Science, Technology, and Innovation (Autonomous Heritage Fund \emph{Francisco Jos\'e de Caldas}) by way of the Research Program No. 1106-852-69523 (Principal Investigator: Hector J. Martinez), Contract: CT FP 80740-439-2020 (Colombian Ministry of Science, Technology, and Innovation -- Minciencias), Grant ID: CI-71241 (Universidad del Valle, Colombia). Olga Vasilieva also appreciates the endorsement obtained from the STIC AmSud Program for regional cooperation (20-STIC-05 NEMBICA project, international coordinator: Pierre-Alexandre Bliman, INRIA -- France). Pedro Gajardo was partially supported by FONDECYT grant N 1200355 ANID-Chile program.

\bibliographystyle{plain}
\addcontentsline{toc}{section}{References}


\begin{thebibliography}{10}

\bibitem{Adekunle2019}
A.~Adekunle, M.~Meehan, and E.~McBryde.
\newblock Mathematical analysis of a \emph{Wolbachia} invasive model with
  imperfect maternal transmission and loss of \emph{Wolbachia} infection.
\newblock {\em Infectious Disease Modelling}, 4:265--285, 2019.

\bibitem{Aida2011}
H.~Aida, H.~Dieng, T.~Satho, A.~Nurita, M.~Salmah, F.~Miake, B.~Norasmah, and
  A.~Ahmad.
\newblock The biology and demographic parameters of \emph{Aedes albopictus} in
  northern peninsular {M}alaysia.
\newblock {\em Asian Pacific Journal of Tropical Biomedicine}, 1(6):472--477,
  2011.

\bibitem{Almeida2019a}
L.~Almeida, M.~Duprez, Y.~Privat, and N.~Vauchelet.
\newblock Mosquito population control strategies for fighting against
  arboviruses.
\newblock {\em Mathematical Biosciences and Engineering}, 16:6274--6297, 2019.

\bibitem{Almeida2020}
L.~Almeida, A.~Haddon, C.~Kermorvant, A.~L{\'e}culier, Y.~Privat, M.~Strugarek,
  N.~Vauchelet, and J.~Zubelli.
\newblock Optimal release of mosquitoes to control dengue transmission.
\newblock {\em ESAIM: Proceedings and Surveys}, 67:16--29, 2020.

\bibitem{Almeida2019b}
L.~Almeida, Y.~Privat, M.~Strugarek, and N.~Vauchelet.
\newblock Optimal releases for population replacement strategies: Application
  to \emph{Wolbachia}.
\newblock {\em SIAM Journal on Mathematical Analysis}, 51(4):3170--3194, 2019.

\bibitem{Angeli2003}
D.~Angeli and E.~Sontag.
\newblock Monotone control systems.
\newblock {\em IEEE Transactions on Automatic Control}, 48(10):1684--1698,
  2003.

\bibitem{Bliman2018}
P.-A. Bliman, M.~S. Aronna, F.~Coelho, and M.~da~Silva.
\newblock Ensuring successful introduction of \emph{Wolbachia} in natural
  populations of \emph{Aedes aegypti} by means of feedback control.
\newblock {\em Journal of Mathematical Biology}, 76(5):1269--1300, 2018.

\bibitem{Bliman2019}
P.-A. Bliman, D.~Cardona-Salgado, Y.~Dumont, and O.~Vasilieva.
\newblock Implementation of control strategies for sterile insect techniques.
\newblock {\em Mathematical biosciences}, 314:43--60, 2019.

\bibitem{Boscain2004}
U.~Boscain and B.~Piccoli.
\newblock {\em Optimal syntheses for control systems on 2-{D} manifolds},
  volume~43 of {\em Math\'{e}matiques \& Applications}.
\newblock Springer-Verlag, Berlin, Germany, 2004.

\bibitem{Campo2017a}
D.~E. Campo-Duarte, D.~Cardona-Salgado, and O.~Vasilieva.
\newblock Establishing \emph{wMelPop Wolbachia} infection among wild
  \emph{Aedes aegypti} females by optimal control approach.
\newblock {\em Applied Mathematics and Information Sciences}, 11(4):1011--1027,
  2017.

\bibitem{Campo2017b}
D.~E. Campo-Duarte, O.~Vasilieva, and D.~Cardona-Salgado.
\newblock Optimal control for enhancement of \emph{Wolbachia} frequency among
  \emph{Aedes aegypti} females.
\newblock {\em International Journal of Pure and Applied Mathematics},
  112(2):219--238, 2017.

\bibitem{Campo2018}
D.~E. Campo-Duarte, O.~Vasilieva, D.~Cardona-Salgado, and M.~Svinin.
\newblock Optimal control approach for establishing \emph{wMelPop Wolbachia}
  infection among wild \emph{Aedes aegypti} populations.
\newblock {\em Journal of Mathematical Biology}, 76(7):1907--1950, 2018.

\bibitem{Cianci2013}
D.~Cianci, J.~Van Den~Broek, B.~Caputo, F.~Marini, D.~Torre, H.~Heesterbeek,
  and N.~Hartemink.
\newblock Estimating mosquito population size from mark--release--recapture
  data.
\newblock {\em Journal of Medical Entomology}, 50(3):533--542, 2013.

\bibitem{Contreras2020}
D.~Contreras-Julio, P.~Aguirre, J.~Mujica, and O.~Vasilieva.
\newblock Finding strategies to regulate propagation and containment of dengue
  via invariant manifold analysis.
\newblock {\em SIAM Journal on Applied Dynamical Systems}, 19(2):1392--1437,
  2020.

\bibitem{Crain2011}
P.~Crain, J.~Mains, E.~Suh, Y.~Huang, P.~Crowley, and S.~Dobson.
\newblock \emph{Wolbachia} infections that reduce immature insect survival:
  Predicted impacts on population replacement.
\newblock {\em BMC Evolutionary Biology}, 11(1):290, 2011.

\bibitem{Oliveira2017}
S.~De~Oliveira, D.~Villela, F.~Dias, L.~Moreira, and R.~de~Freitas.
\newblock How does competition among wild type mosquitoes influence the
  performance of \emph{Aedes aegypti} and dissemination of \emph{Wolbachia}
  pipientis?
\newblock {\em PLoS Neglected Tropical Diseases}, 11(10):e0005947, 2017.

\bibitem{Delatte2009}
H.~Delatte, G.~Gimonneau, A.~Triboire, and D.~Fontenille.
\newblock Influence of temperature on immature development, survival,
  longevity, fecundity, and gonotrophic cycles of \emph{Aedes albopictus},
  vector of chikungunya and dengue in the {I}ndian {O}cean.
\newblock {\em Journal of Medical Entomology}, 46(1):33--41, 2009.

\bibitem{Dorigatti2018}
I.~Dorigatti, C.~McCormack, G.a Nedjati-Gilani, and N.~Ferguson.
\newblock Using \emph{Wolbachia} for dengue control: insights from modelling.
\newblock {\em Trends in Parasitology}, 34(2):102--113, 2018.

\bibitem{Dutra2016}
H.~Dutra, M.~Rocha, F.~Dias, S.~Mansur, E.~Caragata, and L.~Moreira.
\newblock \emph{Wolbachia} blocks currently circulating {Z}ika virus isolates
  in {B}razilian \emph{Aedes aegypti} mosquitoes.
\newblock {\em Cell host \& microbe}, 19(6):771--774, 2016.

\bibitem{Farkas2017}
J.~Farkas, S.~Gourley, R.~Liu, and A.-A. Yakubu.
\newblock Modelling \emph{Wolbachia} infection in a sex-structured mosquito
  population carrying {W}est {N}ile virus.
\newblock {\em Journal of Mathematical Biology}, 75(3):621--647, 2017.

\bibitem{Farkas2010}
J.~Farkas and P.~Hinow.
\newblock Structured and unstructured continuous models for \emph{Wolbachia}
  infections.
\newblock {\em Bulletin of Mathematical Biology}, 72(8):2067--2088, 2010.

\bibitem{Fenton2011}
A.~Fenton, K.~Johnson, J.~Brownlie, and G.~Hurst.
\newblock Solving the \emph{Wolbachia} paradox: modeling the tripartite
  interaction between host, \emph{Wolbachia}, and a natural enemy.
\newblock {\em The American Naturalist}, 178(3):333--342, 2011.

\bibitem{Ferguson2015}
N.~Ferguson, D.~Kien, H.~Clapham, R.~Aguas, V.~Trung, T.~Chau, J.~Popovici,
  P.~Ryan, S.~O'Neill, E.~McGraw, V.~Long, L.~Dui, H.~Nguyen, N.~Vinh~Chau,
  B.~Wills, and C.~Simmons.
\newblock Modeling the impact on virus transmission of
  \emph{Wolbachia}-mediated blocking of dengue virus infection of \emph{Aedes
  aegypti}.
\newblock {\em Science translational medicine}, 7(279):279ra37--279ra37, 2015.

\bibitem{Ferreira2020}
C.~Ferreira.
\newblock \emph{Aedes aegypti} and \emph{Wolbachia} interaction: population
  persistence in an environment changing.
\newblock {\em Theoretical Ecology}, 13:137--148, 2020.

\bibitem{Gouagna2015}
L.~Gouagna, J.-S. Dehecq, D.~Fontenille, Y.~Dumont, and S.~Boyer.
\newblock Seasonal variation in size estimates of \emph{Aedes albopictus}
  population based on standard mark-release-recapture experiments in an urban
  area on {R}eunion {I}sland.
\newblock {\em Acta Tropica}, 143:89--96, 2015.

\bibitem{Hoffmann2011}
A.~Hoffmann, B.~Montgomery, J.~Popovici, I.~Iturbe-Ormaetxe, P.~Johnson,
  F.~Muzzi, M.~Greenfield, M.~Durkan, Y.~Leong, H.~Dong, Y.~Cook, J.~Axford,
  A.~Callahan, N.~Kenny, C.~Omodei, E.~McGraw, P.~Ryan, S.~Ritchie, M.~Turelli,
  and S.~O'Neill.
\newblock Successful establishment of \emph{Wolbachia} in \emph{Aedes}
  populations to suppress dengue transmission.
\newblock {\em Nature}, 476(7361):454--457, 2011.

\bibitem{Hsu1996}
S.-B. Hsu, H.~Smith, and P.~Waltman.
\newblock Competitive exclusion and coexistence for competitive systems on
  ordered {B}anach spaces.
\newblock {\em Transactions of the American Mathematical Society},
  348(10):4083--4094, 1996.

\bibitem{Jiang2004}
J.~Jiang, X.~Liang, and X.-Q. Zhao.
\newblock Saddle-point behavior for monotone semiflows and reaction--diffusion
  models.
\newblock {\em Journal of Differential Equations}, 203(2):313--330, 2004.

\bibitem{McMeniman2009}
C.~McMeniman, R.~Lane, B.~Cass, A.~Fong, M.~Sidhu, Y.-F. Wang, and S.~O'Neill.
\newblock Stable introduction of a life-shortening \emph{Wolbachia} infection
  into the mosquito \emph{Aedes aegypti}.
\newblock {\em Science}, 323(5910):141--144, 2009.

\bibitem{McMeniman2010}
C.~McMeniman and S.~O'Neill.
\newblock A virulent \emph{Wolbachia} infection decreases the viability of the
  dengue vector \emph{Aedes aegypti} during periods of embryonic quiescence.
\newblock {\em PLoS Neglected Tropical Diseases}, 4(7):e748, 2010.

\bibitem{Moreira2009}
L.~Moreira, I.~Iturbe-Ormaetxe, G.~Jeffery, J.and~Lu, A.~Pyke, L.~Hedges,
  B.~Rocha, S.~Hall-Mendelin, A.~Day, M.~Riegler, L.~Hugo, K.~Johnson, B.~Kay,
  E.~McGraw, A.~van~den Hurk, P.~Ryan, and S.~O'Neill.
\newblock A \emph{Wolbachia} symbiont in \emph{Aedes aegypti} limits infection
  with dengue, chikungunya, and plasmodium.
\newblock {\em Cell}, 139(7):1268--1278, 2009.

\bibitem{Perko2013}
L.~Perko.
\newblock {\em Differential Equations and Dynamical Systems}.
\newblock Texts in Applied Mathematics. Springer, New York, USA, 2013.

\bibitem{Pliego2020}
E.~Pliego-Pliego, O.~Vasilieva, J.~Vel{\'a}zquez-Castro, and
  A.~Fraguela-Collar.
\newblock Control strategies for a population dynamics model of \emph{Aedes
  aegypti} with seasonal variability and their effects on dengue incidence.
\newblock {\em Applied Mathematical Modelling}, 81:296--319, 2020.

\bibitem{Ritchie2015}
S.~Ritchie, M.~Townsend, C.~Paton, A.~Callahan, and A.~Hoffmann.
\newblock Application of \emph{wMelPop Wolbachia} strain to crash local
  populations of \emph{Aedes aegypti}.
\newblock {\em PLoS Neglected Tropical Diseases}, 9(7):e0003930, 2015.

\bibitem{Ruang2006}
T.~Ruang-Areerate and P.~Kittayapong.
\newblock \emph{Wolbachia} transinfection in \emph{Aedes aegypti}: a potential
  gene driver of dengue vectors.
\newblock {\em Proceedings of the National Academy of Sciences},
  103(33):12534--12539, 2006.

\bibitem{Schraiber2012}
J.~Schraiber, A.~Kaczmarczyk, R.~Kwok, M.~Park, R.~Silverstein, F.~Rutaganira,
  T.~Aggarwal, M.~Schwemmer, C.~Hom, R.~Grosberg, and S.~Schreiber.
\newblock Constraints on the use of lifespan-shortening \emph{Wolbachia} to
  control dengue fever.
\newblock {\em Journal of theoretical biology}, 297:26--32, 2012.

\bibitem{Smith1995}
H.~Smith.
\newblock {\em Monotone Dynamical Systems: An Introduction to the Theory of
  Competitive and Cooperative Systems}, volume~41 of {\em Mathematical Surveys
  and Monographs}.
\newblock American Mathematical Society, Providence RI, USA, 1995.

\bibitem{Smith2017}
H.~Smith.
\newblock Monotone dynamical systems: Reflections on new advances \&
  applications.
\newblock {\em Discrete \& Continuous Dynamical Systems -- A}, 37(1):485--504,
  2017.

\bibitem{Styer2007}
L.~Styer, S.~Minnick, A.~Sun, and T.~Scott.
\newblock Mortality and reproductive dynamics of \emph{Aedes aegypti}
  ({D}iptera: {C}ulicidae) fed human blood.
\newblock {\em Vector-Borne and Zoonotic Diseases}, 7(1):86--98, 2007.

\bibitem{Suh2013}
E.~Suh and S.~Dobson.
\newblock Reduced competitiveness of \emph{Wolbachia} infected \emph{Aedes
  aegypti} larvae in intra-and inter-specific immature interactions.
\newblock {\em Journal of Invertebrate Pathology}, 114(2):173--177, 2013.

\bibitem{Sussmann1987a}
H.~Sussmann.
\newblock Regular synthesis for time-optimal control of single-input real
  analytic systems in the plane.
\newblock {\em SIAM journal on control and optimization}, 25(5):1145--1162,
  1987.

\bibitem{Sussmann1987b}
H.~Sussmann.
\newblock The structure of time-optimal trajectories for single-input systems
  in the plane: the $c^{\infty}$ nonsingular case.
\newblock {\em SIAM Journal on Control and Optimization}, 25(2):433--465, 1987.

\bibitem{Sussmann1987c}
H.~Sussmann.
\newblock The structure of time-optimal trajectories for single-input systems
  in the plane: the general real analytic case.
\newblock {\em SIAM Journal on Control and Optimization}, 25(4):868--904, 1987.

\bibitem{Turelli2010}
M.~Turelli.
\newblock Cytoplasmic incompatibility in populations with overlapping
  generations.
\newblock {\em Evolution}, 64(1):232--241, 2010.

\bibitem{Walker2011}
T.~Walker, P.~Johnson, L.~Moreira, I.~Iturbe-Ormaetxe, F.~Frentiu,
  C.~McMeniman, Y.~Leong, Y.~Dong, J.~Axford, P.~Kriesner, A.~Lloyd,
  S.~Ritchie, S.~O'Neill, and A.~Hoffmann.
\newblock The \emph{wMel Wolbachia} strain blocks dengue and invades caged
  \emph{Aedes aegypti} populations.
\newblock {\em Nature}, 476(7361):450--453, 2011.

\bibitem{Woolfit2013}
M.~Woolfit, I.~Iturbe-Ormaetxe, J.~Brownlie, T.~Walker, M.~Riegler,
  A.~Seleznev, J.~Popovici, E.~Ranc{\`e}s, B.~Wee, J.~Pavlides, M.~Sullivan,
  S.~Beatson, A.~Lane, M.~Sidhu, C.~McMeniman, E.~McGraw, and S.~O'Neill.
\newblock Genomic evolution of the pathogenic \emph{Wolbachia} strain,
  \emph{wMelPop}.
\newblock {\em Genome Biology and Evolution}, 5(11):2189--2204, 2013.

\bibitem{Xue2017}
L.~Xue, C.~Manore, P.~Thongsripong, and J.~Hyman.
\newblock Two-sex mosquito model for the persistence of \emph{Wolbachia}.
\newblock {\em Journal of biological dynamics}, 11(S1):216--237, 2017.

\bibitem{Yeap2011}
H.~Yeap, P.~Mee, T.~Walker, A.~Weeks, S.~O'Neill, P.~Johnson, S.~Ritchie,
  K.~Richardson, C.~Doig, N.~Endersby, and A.~Hoffmann.
\newblock Dynamics of the ``popcorn'' \emph{Wolbachia} infection in outbred
  \emph{Aedes aegypti} informs prospects for mosquito vector control.
\newblock {\em Genetics}, 187(2):583--595, 2011.

\bibitem{Zheng2014}
B.~Zheng, M.~Tang, and J.~Yu.
\newblock Modeling \emph{Wolbachia} spread in mosquitoes through delay
  differential equations.
\newblock {\em SIAM Journal on Applied Mathematics}, 74(3):743--770, 2014.

\end{thebibliography}

\end{document}